\documentclass{article}
\usepackage{amsthm}
\usepackage{amsmath}
\usepackage{amssymb}
\usepackage{mathrsfs}
\usepackage{tikz}
\usepackage{graphicx}
\usepackage{relsize}
\usepackage{geometry}
\usepackage{enumitem}
\usepackage{xcolor}
\usepackage{booktabs}
\usepackage{authblk}
\usepackage{float}

\usetikzlibrary{arrows.meta}
\newtheorem{theorem}{Theorem}
\newtheorem{proposition}{Proposition}
\newtheorem{lemma}{Lemma}
\newtheorem{corollary}{Corollary}
\theoremstyle{definition}
\newtheorem{definition}{Definition}
\newtheorem{question}{Question}
\newtheorem{remark}{Remark}
\newtheorem{example}{Example}

\title{Free subgroups in weighted Leavitt Path Algebras}

\author{Huynh Viet Khanh}
\affil{Department of Mathematics and Informatics, HCMC University of Education, Ho Chi Minh City, Vietnam, \texttt{khanhhv@hcmue.edu.vn}}
\date{ }

\begin{document}

\maketitle

\begin{abstract}
We study unit groups of weighted Leavitt path algebras. Let $K$ be a field
of characteristic $0$ and let $(E,\omega)$ be a finite connected weighted
graph. We prove that $L_K(E,\omega)^\times$ is abelian if and only if
$L_K(E,\omega)$ is a domain. Equivalently, $L_K(E,\omega)^\times$ contains
no non-cyclic free subgroup if and only if $L_K(E,\omega)$ is a domain.
\end{abstract}

\section{Introduction and Preliminaries}\label{sec:intro}
Let $K$ be a field and let $(E,\omega)$ be a finite connected weighted graph. This paper studies the unit group $L_K(E,\omega)^\times$ of the weighted Leavitt path algebra $L_K(E,\omega)$. We prove that $L_K(E,\omega)^\times$ is abelian if and only if $L_K(E,\omega)$ is a domain. By the domain classification of Hazrat and Preusser given in \cite{HaP}, this occurs precisely in three cases: the isolated vertex, the unweighted rose with one petal, and the LV-rose case. We also prove that, if $\operatorname{char}K=0$, then $L_K(E,\omega)^\times$ contains a non-cyclic free subgroup if and only if $L_K(E,\omega)$ is not a domain.

The existence of non-cyclic free subgroups has long been a central topic of interest in algebra, with one of the most influential starting points being the seminal work of Tits \cite{T}. In the linear setting, this phenomenon is described in the Tits alternative for finitely generated linear groups.
For division rings, Lichtman conjectured in \cite{L} that $D^*$ contains a non-cyclic free subgroup whenever $D$ is noncommutative. Gon\c{c}alves and Mandel later proposed in \cite{GM} the corresponding problem for noncentral subnormal subgroups of $D^*$. These questions remain open in this generality, although many special cases are known; see \cite{HK} and the references there.

For ordinary Leavitt path algebras, Hai and Khanh proved in \cite{HK} that if $K$ has characteristic $0$ and $L_K(E)$ is unital and noncommutative, then $L_K(E)^\times$ contains a non-cyclic free subgroup. One aim of the present paper is to prove the corresponding statement for weighted Leavitt path algebras.

Weighted Leavitt path algebras were introduced by Hazrat in \cite{Ha}. They include ordinary Leavitt path algebras as the case in which all weights are equal to $1$, and they recover Leavitt's algebras $L_K(m,n)$ from the weighted rose with one vertex and $n$ loops, all of weight $m$. Hazrat and Preusser developed a normal form theory for these algebras and classified those which are domains in \cite{HaP}. Preusser computed their Gelfand--Kirillov dimension in \cite{Pre3}, and Hazrat, Preusser and Shchegolev studied their representation theory by means of representation graphs in \cite{HPS}. For ordinary Leavitt path algebras, see Abrams, Ara, and Siles Molina \cite{AAS}; for a recent overview of weighted Leavitt path algebras, see Preusser \cite{Pre4}.

The paper is organized as follows. Section~2 treats LV-algebras and determines the unit group in the LV-domain case. Section~3 proves the Bergman--Preusser embedding result for weighted roses outside the domain cases. Section~4 handles the multivertex case using representation graphs and Sanov's theorem. In Section~5 we combine these results with the domain classification of Hazrat and Preusser to obtain the classification theorem.

The proof separates the one-vertex and multivertex cases. For LV-roses, the local valuation argument given in \cite{HaP} shows that all invertible elements are scalar. For weighted roses outside the domain cases, we use the Bergman--Preusser monoid argument to produce a unital copy of a noncommutative ordinary Leavitt path algebra. The theorem of Hai and Khanh then supplies the required free subgroup in characteristic $0$. When $|E^0|>1$, representation graphs allow us to isolate a two-dimensional subspace on which suitable invertible elements act as the Sanov matrices given in \cite{S}.

We now fix notation. A \emph{weighted graph} $(E,\omega)=(E^0,E^{\mathrm{st}},E^1,s,r,\omega)$ consists of a set of vertices $E^0$, a set of structured edges $E^{\mathrm{st}}$, source and range maps $s,r:E^{\mathrm{st}}\to E^0$, and a weight map $\omega:E^{\mathrm{st}}\to\mathbb N$. Each structured edge $\mathtt{e}\in E^{\mathrm{st}}$ gives real edges $\mathtt{e}_1,\ldots,\mathtt{e}_{\omega(\mathtt{e})}$, and $E^1=\bigsqcup_{\mathtt{e}\in E^{\mathrm{st}}}\{\mathtt{e}_1,\ldots,\mathtt{e}_{\omega(\mathtt{e})}\}$, with $s(\mathtt{e}_i)=s(\mathtt{e})$ and $r(\mathtt{e}_i)=r(\mathtt{e})$. The graph is \emph{row-finite} if $s^{-1}(\mathtt{v})$ is finite for every $\mathtt{v}\in E^0$. For a nonsink vertex $\mathtt{v}$, set $\omega(\mathtt{v})=\max\{\omega(\mathtt{e})\mid \mathtt{e}\in E^{\mathrm{st}},\ s(\mathtt{e})=\mathtt{v}\}$, and for a sink we set $\omega(\mathtt{v})=0$.

Let $(E,\omega)$ be a row-finite weighted graph. If $E^0$ is finite, then $(E,\omega)$ is finite. Indeed, $E^{\mathrm{st}}=\bigcup_{\mathtt{v}\in E^0}s^{-1}(\mathtt{v})$, and each $s^{-1}(\mathtt{v})$ is finite. Hence $E^{\mathrm{st}}$ is finite. Since $\omega(\mathtt{e})<\infty$ for every $\mathtt{e}\in E^{\mathrm{st}}$, the set $E^1=\bigsqcup_{\mathtt{e}\in E^{\mathrm{st}}}\{\mathtt{e}_1,\ldots,\mathtt{e}_{\omega(\mathtt{e})}\}$ is finite. Thus, in the row-finite setting, $|E^0|<\infty$ is equivalent to $(E,\omega)$ being finite.

\begin{definition}[Weighted Leavitt path algebra]
Let $(E,\omega)$ be a weighted graph and $K$ a field. The \emph{weighted Leavitt path algebra} of $(E,\omega)$ over $K$, denoted $L_K(E,\omega)$, is the $K$-algebra generated by
$$
\{\mathtt{v}\mid \mathtt{v}\in E^0\}\cup
\{\mathtt{e}_i,\mathtt{e}_i^*\mid \mathtt{e}\in E^{\mathrm{st}},\ 1\leq i\leq \omega(\mathtt{e})\},
$$
subject to the following relations:
\begin{enumerate}[label=\textup{(W\arabic*)}]
\item $\mathtt{u}\mathtt{v}=\delta_{\mathtt{u}\mathtt{v}}\mathtt{u}$ for all $\mathtt{u},\mathtt{v}\in E^0$;
\item $s(\mathtt{e})\mathtt{e}_i=\mathtt{e}_i=\mathtt{e}_i r(\mathtt{e})$ and $r(\mathtt{e})\mathtt{e}_i^*=\mathtt{e}_i^*=\mathtt{e}_i^*s(\mathtt{e})$, for all $\mathtt{e}\in E^{\mathrm{st}}$ and $1\leq i\leq\omega(\mathtt{e})$;
\item for every nonsink vertex $\mathtt{v}\in E^0$ and all $1\leq i,j\leq\omega(\mathtt{v})$,
$$
\sum_{\mathtt{e}\in s^{-1}(\mathtt{v})}\mathtt{e}_i\mathtt{e}_j^*=\delta_{ij}\mathtt{v};
$$
\item for all $\mathtt{e},\mathtt{f}\in E^{\mathrm{st}}$,
$$
\sum_{i=1}^{\max\{\omega(\mathtt{e}),\omega(\mathtt{f})\}}\mathtt{e}_i^*\mathtt{f}_i
=\delta_{\mathtt{e}\mathtt{f}}r(\mathtt{e}).
$$
\end{enumerate}
In \textup{(W3)} and \textup{(W4)} we use the convention that $\mathtt{e}_i=\mathtt{e}_i^*=0$ whenever $i>\omega(\mathtt{e})$.
\end{definition}

If $\omega(\mathtt{e})=1$ for every $\mathtt{e}\in E^{\mathrm{st}}$, then each structured edge $\mathtt{e}$ gives only one real edge $\mathtt{e}_1$. Identifying $\mathtt{e}$ with $\mathtt{e}_1$, relations \textup{(W1)}--\textup{(W4)} become the usual Cuntz--Krieger relations for the ordinary Leavitt path algebra of the graph with vertex set $E^0$ and edge set $E^{\mathrm{st}}$. Thus, in the unweighted case, $L_K(E,\omega)$ is naturally isomorphic to the ordinary Leavitt path algebra $L_K(E)$.

We also use representation graphs in the sense of \cite{HPS}. Let $\widehat E=(\widehat E^0,\widehat E^1,\widehat s,\widehat r)$ be the directed graph associated to $(E,\omega)$, where $\widehat E^0=E^0$, $\widehat E^1=\{\mathtt{e}_i\mid \mathtt{e}\in E^{\mathrm{st}},\ 1\leq i\leq\omega(\mathtt{e})\}$, and $\widehat s(\mathtt{e}_i)=s(\mathtt{e})$, $\widehat r(\mathtt{e}_i)=r(\mathtt{e})$. For $\mathtt{e}_i\in\widehat E^1$, set $\operatorname{tag}(\mathtt{e}_i)=i$ and $\operatorname{st}(\mathtt{e}_i)=\mathtt{e}$.

\begin{definition}\label{definition_VF}
A \textit{representation graph} of $(E,\omega)$ is a pair $(F,\phi)$, where $F=(F^0,F^1,s_F,r_F)$ is a directed graph and $\phi=(\phi^0,\phi^1):F\to\widehat E$ is a graph homomorphism, such that:
\begin{enumerate}[label=\textup{(\arabic*)}]
\item for any $v\in F^0$ and $1\leq i\leq\omega(\phi^0(v))$, there is precisely one $f\in s_F^{-1}(v)$ with $\operatorname{tag}(\phi^1(f))=i$;
\item for any $v\in F^0$ and any $\mathtt{e}\in E^{\mathrm{st}}$ with $r(\mathtt{e})=\phi^0(v)$, there is precisely one $f\in r_F^{-1}(v)$ with $\operatorname{st}(\phi^1(f))=\mathtt{e}$.
\end{enumerate}
We refer these two conditions as \textit{outgoing condition} and \textit{incoming condition} respectively.
\end{definition}

For a representation graph $(F,\phi)$, let $V_F$ be the $K$-vector space with basis $F^0$. For $\mathtt{u}\in E^0$, $\mathtt e\in E^{\mathrm{st}}$, and $1\leq i\leq\omega(\mathtt e)$, define endomorphisms $\sigma_u,\sigma_{\mathtt e_i},\sigma_{\mathtt e_i^*}\in\operatorname{End}_K(V_F)$ on basis elements $v\in F^0$ by
$$
\begin{aligned}
\sigma_u(v)&=
\begin{cases}
v,&\phi^0(v)=u,\\
0,&\text{otherwise},
\end{cases}
\\[1ex]
\sigma_{\mathtt e_i}(v)&=
\begin{cases}
r_F(f),&\text{if }f\in s_F^{-1}(v)\text{ and }\phi^1(f)=\mathtt e_i,\\
0,&\text{otherwise},
\end{cases}
\\[1ex]
\sigma_{\mathtt e_i^*}(v)&=
\begin{cases}
s_F(f),&\text{if }f\in r_F^{-1}(v)\text{ and }\phi^1(f)=\mathtt e_i,\\
0,&\text{otherwise}.
\end{cases}
\end{aligned}
$$
Then, the universal property of $L_K(E,\omega)$ induces an algebra homomorphism
$$
\pi:L_K(E,\omega)\longrightarrow \operatorname{End}_K(V_F)^{\mathrm{op}},
$$
given by $\pi(u)=\sigma_u$, $\sigma_{\mathtt e_i}$ and $\sigma_{\mathtt e_i^*}$.
In particular, via $\pi$, the vector space $V_F$ becomes a right $L_K(E,\omega)$-module, where the action is given by
$$
a\cdot v := \pi(a)(v),
$$
for all $a\in L_K(E,\omega)$ and $v\in V_F$; see \cite[p.~168]{HPS}. 

Throughout, $K$ denotes a field and all weighted graphs are connected and finite. Hence $L_K(E,\omega)$ is unital with identity $1=\sum_{v\in E^0}v$.

\section{Unit groups of LV-algebras}

In this section we determine the unit group in the LV-domain case. The main tool is the local valuation given in \cite{HaP}. For an LV-rose this local valuation becomes a valuation, and this forces every invertible element to have support contained in $E^0$. Consequently, the only invertible elements are scalar.

\begin{definition}[{\cite[Definition~38]{HaP}}]
A weighted graph $(E,\omega)$ is called an \emph{LV-graph} if it is connected,
$E^{\mathrm{st}}\neq\emptyset$, 
$$
\omega(\mathtt{e})\geq 2 \text{ for every } \mathtt{e}\in E^{\mathrm{st}}, \text{ and } \bigl|\{\mathtt{e}\in s^{-1}(\mathtt{v})\mid
\omega(\mathtt{e})=\omega(\mathtt{v})\}\bigr|\geq 2 \text{ for every non-sink vertex } \mathtt{v},
$$
where $\omega(\mathtt{v})=\max\{\omega(\mathtt{e})\mid
\mathtt{e}\in s^{-1}(\mathtt{v})\}$.
An LV-graph with one vertex is called an \emph{LV-rose}. The weighted Leavitt
path algebras of LV-graphs are called \emph{LV-algebras}.
\end{definition}

\begin{definition}[{\cite[Definition 36]{HaP}}]
Let $(E,\omega)$ be a weighted graph and $R$ a domain. For $a\in L_R(E,\omega)$, the
\emph{support} of $a$, denoted $\operatorname{supp}(a)$, is the set of all normal
generalised paths which occur with nonzero coefficient in the normal form
$\operatorname{NF}(a)$.
\end{definition}

\begin{definition}[{\cite[Definition 37]{HaP}}]\label{def:local valuation}
Let $(E,\omega)$ be a weighted graph and let $R$ be a domain. A \emph{local valuation} on $L_R(E,\omega)$
is a map $\nu:L_R(E,\omega)\to \mathbb N_0\cup\{-\infty\}$ such that
\begin{enumerate}[label=\textup{(\arabic*)}]
    \item $\nu(a)=-\infty$ if and only if $a=0$; $\nu(a)=0$ if and only if $a\neq 0$ and $\operatorname{supp}(a)\subseteq E^0$;
    \item $\nu(a+b)\leq \max\{\nu(a),\nu(b)\}$ for all $a,b\in L_R(E,\omega)$;
    \item $\nu(ab)=\nu(a)+\nu(b)$ whenever $v\in E^0$, $a\in L_R(E,\omega)v$, and $b\in vL_R(E,\omega)$.
\end{enumerate}
We use the conventions
$-\infty\leq x$ and $x+(-\infty)=(-\infty)+x=-\infty$.
\end{definition}

\begin{proposition}[{\cite[Proposition 40]{HaP}}]\label{prop:local-valuation}
Let $(E,\omega)$ be an LV-graph. Then $\nu(a)=\max\{|p|\mid p\in \operatorname{supp}(a)\}$ for $a\in L_R(E,\omega)$,
with $\max(\emptyset)=-\infty$, defines a local valuation on $L_R(E,\omega)$.
\end{proposition}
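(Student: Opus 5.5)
Since this proposition is quoted from \cite[Proposition~40]{HaP}, the plan follows the strategy of that paper rather than inventing a new one. Property (1) is immediate from the definition: $\nu(a)=-\infty$ exactly when $\operatorname{supp}(a)=\emptyset$, i.e.\ $a=0$ by uniqueness of normal forms. Moreover $\nu(a)=0$ exactly when every normal generalised path in the support has length $0$, i.e.\ lies in $E^0$. Property (2) is also formal. Normal forms are additive, so $\operatorname{supp}(a+b)\subseteq\operatorname{supp}(a)\cup\operatorname{supp}(b)$, and taking maxima of lengths gives the ultrametric inequality. The whole content is therefore property (3).

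For (3), I will work with the normal form of \cite{HaP}. This comes from a Gröbner--Shirshov / diamond-lemma reduction system in which one chooses, at each vertex $\mathtt{v}$, a special structured edge $\mathtt{e}^{\mathtt{v}}$ of maximal weight. One uses (W3) to eliminate the words $\mathtt{e}^{\mathtt{v}}_i(\mathtt{e}^{\mathtt{v}}_j)^*$, and (W4) to eliminate the words $\mathtt{e}_1^*\mathtt{f}_1$ (a fixed tag, say the first). Normal generalised paths are then words in the $\mathtt{e}_i,\mathtt{e}_i^*$ avoiding these forbidden subwords.

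Take $a\in L_R(E,\omega)\mathtt{v}$ and $b\in\mathtt{v}L_R(E,\omega)$. Let $p$ be a path of maximal length in $\operatorname{supp}(a)$ and $q$ one of maximal length in $\operatorname{supp}(b)$, both passing through $\mathtt{v}$. The product $ab$ is a sum of concatenations $p'q'$. The key step is a cancellation-free lemma:
\begin{itemize}
\item the concatenation $pq$ of two normal paths either is already normal, or becomes normal after a bounded local modification at the junction that preserves length;
\item the LV hypotheses guarantee that some leading term of length $|p|+|q|$ survives.
\end{itemize}

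Concretely, I would first reduce to the case where $p$ and $q$ are chosen extremal with respect to a suitable deg-lex order refining length. I would then show that the top-order term of $ab$ is the normal form of $pq$, which has length exactly $|p|+|q|$, and that no other pair $(p',q')$ contributes the same word.

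The LV conditions enter here. Because $\omega(\mathtt{e})\ge2$ for every $\mathtt{e}$ and at least two edges at each vertex attain $\omega(\mathtt{v})$, the forbidden junctions (W3-type $\mathtt{e}^{\mathtt{v}}_i(\mathtt{e}^{\mathtt{v}}_j)^*$ and W4-type $\mathtt{e}_1^*\mathtt{f}_1$) can be rewritten as sums of words of the same length. Applying (W3) replaces $\mathtt{e}^{\mathtt{v}}_i(\mathtt{e}^{\mathtt{v}}_j)^*$ by $\delta_{ij}\mathtt{v}-\sum_{\mathtt{e}\ne\mathtt{e}^{\mathtt{v}}}\mathtt{e}_i\mathtt{e}_j^*$, and the non-$\delta$ part is nonempty of length $2$ precisely because another edge of maximal weight exists. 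Similarly (W4) with $\omega\ge2$ replaces $\mathtt{e}_1^*\mathtt{f}_1$ by $\delta r-\sum_{i\ge2}\mathtt{e}_i^*\mathtt{f}_i$, again a nonzero sum of length-$2$ words. So no reduction step can make all top-length terms vanish. One then checks that, among the resulting length-$(|p|+|q|)$ words, the maximal one in the monomial order appears with coefficient equal to the product of the leading coefficients of $a$ and $b$. This coefficient is nonzero because $R$ is a domain.

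The main obstacle is exactly this leading-term analysis. Rewriting at the junction could cascade, since a replaced letter could create a new forbidden subword with its neighbour. I would first try to choose the order so that rewriting only ever produces strictly smaller words of the same length, with the leading word of the product equal to a canonical word $\mathrm{lt}(p)\ast\mathrm{lt}(q)$. That would turn (3) into the statement that the leading word is multiplicative, from which $\nu(ab)=\nu(a)+\nu(b)$ follows.
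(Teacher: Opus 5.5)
Your treatment of (1) and (2) is fine, and so is your set-up of the normal form. The paper itself gives no argument here; it only quotes \cite[Proposition~40]{HaP}. The gap is in (3). The decisive step is stated as a hope, and the specific claims you make there are false as stated. The real difficulty is not a cascade inside one product $pq$ but cancellation \emph{between different pairs} of top-length support elements.

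For instance, take the LV-rose with two loops $\mathtt e$ (special) and $\mathtt f$, both of weight $2$, and put $a=\mathtt e_1+t\,\mathtt f_1$, $b=\mathtt e_1^*+s\,\mathtt f_1^*$. Since $\mathtt e_1\mathtt e_1^*=\mathtt v-\mathtt f_1\mathtt f_1^*$,
\[
ab=\mathtt v+s\,\mathtt e_1\mathtt f_1^*+t\,\mathtt f_1\mathtt e_1^*+(ts-1)\,\mathtt f_1\mathtt f_1^*.
\]
\begin{itemize}
\item The normal word $\mathtt f_1\mathtt f_1^*$ receives contributions from both pairs $(\mathtt f_1,\mathtt f_1^*)$ and $(\mathtt e_1,\mathtt e_1^*)$, and vanishes when $s=t=1$. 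So ``no other pair contributes the same word'' fails.
\item Take the deg-lex order with $\mathtt e>\mathtt f$ on real and on ghost letters. It does make forbidden words larger than their replacements, as you require. Yet the leading word of $ab$ is $\mathtt e_1\mathtt f_1^*$ with coefficient $s$, and a different word when $s=0$. The product of the leading coefficients of $a$ and $b$ is $1$.
\end{itemize}
So the leading word of $ab$ is not determined by $\mathrm{lt}(a),\mathrm{lt}(b)$ with coefficient their product, and you never exhibit an order for which it is. Your use of the LV conditions (nonempty replacement sums) only shows that $\operatorname{NF}(pq)$ has a nonzero top part for a single pair, which is not the issue.

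Conversely, your cascade worry is unfounded. Replace a forbidden junction $xy$ by the non-$\delta$ part of its rewriting. A (W3)-rewriting keeps the tags of the two letters, and a (W4)-rewriting keeps their structured edges. These are exactly what decide whether they form forbidden pairs with their outer neighbours, so the new words are already normal. Further reduction occurs only in the strictly shorter $\delta$-term.

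Here is a way to close the gap without any order. Assume $\nu(a),\nu(b)\geq1$; the other cases are trivial. Write the top-length support elements as $p=P'x$ and $q=yQ'$ with letters $x,y$. Put $\alpha_{P'}=\sum_x\lambda_{P'x}x$ and $\beta_{Q'}=\sum_y\mu_{yQ'}y$. Let $\tau$ be the bilinear length-two product: concatenation, or minus the replacement sum at a forbidden word. By the previous remark, the length-$(\nu(a)+\nu(b))$ part of $ab$ is $\sum_{P',Q'}P'\,\tau(\alpha_{P'},\beta_{Q'})\,Q'$, and different blocks consist of different normal words. So it suffices to show $\tau(\alpha,\beta)\neq0$ for nonzero letter combinations $\alpha$ ending at $\mathtt v$ and $\beta$ starting at $\mathtt v$.

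Rewriting preserves the real/ghost types of both letters. So the four type components can be treated separately, and only the two mixed ones involve rewriting.

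\emph{Real--ghost words.} If $\tau=0$, then the coefficients $\lambda_{\mathtt e,i}\mu_{\mathtt h,l}$ of $\mathtt e_i\mathtt h_l^*$ with $\mathtt e\neq\mathtt h$ vanish, so $\alpha$ and $\beta$ live on one edge. If that edge is not special, nothing is rewritten and $\tau\neq0$. If it is the special edge at $\mathtt w$, a second edge of maximal weight at $\mathtt w$ carries the coefficients $-\lambda_i\mu_l$, which is again a contradiction.

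\emph{Ghost--real words.} The coefficients of $\mathtt f_j^*\mathtt g_k$ with $j\neq k$ force a common tag $j_0$. If $j_0\geq2$, nothing is rewritten. If $j_0=1$, the word $\mathtt f_2^*\mathtt g_2$ exists because all weights are at least $2$. Its coefficient is $-\lambda_{\mathtt f,1}\mu_{\mathtt g,1}\neq0$, since $R$ is a domain.

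These two cases are exactly where the two LV conditions enter.
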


We shall call $\nu$ a \textit{valuation}
if $\nu(ab)=\nu(a)+\nu(b)$ for all $a,b$.

\begin{theorem}\label{thm:equiv-units-lv-domain}
Let $K$ be a field, let $(E,\omega)$ be an LV-graph, and put
$A=L_K(E,\omega)$. Then the following statements are equivalent:
\begin{enumerate}[label=\textup{(\arabic*)}]
\item $(E,\omega)$ is an LV-rose.
\item The function $\nu$ defined in Proposition \ref{prop:local-valuation} is a valuation on $A$.
\item $A$ is a domain.
\item $A$ is unital and $A^\times \cong K^\times$.
\end{enumerate}
\end{theorem}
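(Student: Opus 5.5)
I will prove the cycle of implications $(1)\Rightarrow(2)\Rightarrow(3)\Rightarrow(1)$ and then attach (4) via $(2)\Rightarrow(4)\Rightarrow(3)$; the reverse step $(4)\Rightarrow(3)$ may go through (1) instead.

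\textbf{$(1)\Rightarrow(2)$.} If $(E,\omega)$ is an LV-rose with vertex $\mathtt v$, then $1=\mathtt v$ and $A=\mathtt vA=A\mathtt v$. Hence for all $a,b\in A$ we have $a\in A\mathtt v$ and $b\in\mathtt vA$. Condition (3) of Definition~\ref{def:local valuation}, together with Proposition~\ref{prop:local-valuation}, then gives $\nu(ab)=\nu(a)+\nu(b)$ for all $a,b$.

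\textbf{$(2)\Rightarrow(3)$.} If $a,b\neq0$, then $\nu(a),\nu(b)\in\mathbb N_0$, so $\nu(ab)\neq-\infty$ and therefore $ab\neq0$.

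\textbf{$(3)\Rightarrow(1)$.} If $|E^0|\ge2$, connectedness gives two distinct vertices $\mathtt u\ne\mathtt v$. Both are nonzero in $A$, for instance by the normal form basis of \cite{HaP}, which contains $E^0$. Since $\mathtt u\mathtt v=0$ by (W1), $A$ is not a domain. Note that $E^0\neq\emptyset$, because $E^{\mathrm{st}}\neq\emptyset$.

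\textbf{$(2)\Rightarrow(4)$.} Let $a\in A^\times$ with $ab=1$. Since $\operatorname{supp}(1)=\{\mathtt v\}\subseteq E^0$, we get $0=\nu(1)=\nu(a)+\nu(b)$ with both terms in $\mathbb N_0$, so $\nu(a)=0$. Thus $\operatorname{supp}(a)\subseteq E^0=\{\mathtt v\}$ (here (2) implies (1) through (3)), and so $a=\lambda\mathtt v=\lambda\cdot1$ with $\lambda\in K^\times$. Conversely, nonzero scalars are clearly units, and $\lambda\mapsto\lambda1$ is an injective group homomorphism, since $1\neq0$. Hence $A^\times\cong K^\times$. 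Unitality holds because the graph is finite.

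\textbf{$(4)\Rightarrow(3)$, or $(4)\Rightarrow(1)$.} I expect this to be the main obstacle, since it needs an explicit nonscalar unit whenever $|E^0|\ge2$. Suppose $\mathtt u\ne\mathtt v$. By connectedness there is a structured edge $\mathtt e$ joining two distinct vertices. Say $s(\mathtt e)=\mathtt u$, $r(\mathtt e)=\mathtt v$ with $\mathtt u\neq\mathtt v$, replacing $\mathtt e$ along a path if needed. Put $x=\mathtt e_1$. Then $x^2=\mathtt e_1\mathtt u\,\mathtt v\mathtt e_1$, which equals $\mathtt e_1\mathtt v\cdot\mathtt u\mathtt e_1$ after rewriting via (W2), and this vanishes since $\mathtt v\mathtt u=0$. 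Thus $1+x$ is a unit with inverse $1-x$. It is not a scalar, because $x\neq0$ and $x\notin K1$: the element $\mathtt e_1$ is a normal path of length $1$ in the basis of \cite{HaP}, or alternatively $x=\mathtt u x\mathtt v$ while $\mathtt u\,1\,\mathtt v=0$.

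The remaining caveat is to make sure $K^\times$ is not abstractly isomorphic to a group containing such extra units. I will therefore read (4) as saying that $A^\times$ consists of the scalars $K^\times\cdot1$, as the section introduction indicates. Under that reading, the existence of the unit $1+x$ contradicts (4), so $|E^0|=1$, which gives (1) and hence (3).
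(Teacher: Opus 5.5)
Your proof is correct and is essentially the paper's proof. The steps $(1)\Rightarrow(2)\Rightarrow(3)$ coincide. Your $(2)\Rightarrow(4)$ is the paper's $(1)\Rightarrow(4)$: both force $\nu(a)=0$ from $\nu(1)=0=\nu(a)+\nu(b)$. Your $(4)\Rightarrow(1)$ uses the same unit $1+\mathtt{e}_i$ and the same sandwiching by $\mathtt{u}$ and $\mathtt{v}$.

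The only genuine difference is $(3)\Rightarrow(1)$. The paper invokes the domain classification \cite[Theorem~41]{HaP}. You observe instead that for LV-graphs this step is immediate, because two distinct vertices are nonzero orthogonal idempotents. Your version is more elementary and needs only that $E^0$ lies in the normal-form basis.

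Your caveat about (4) is legitimate, and it applies equally to the paper: its proof of $(4)\Rightarrow(1)$ also only refutes $A^\times=K^\times 1$. If you want the literal statement $A^\times\not\cong K^\times$, argue as in the proof of Theorem~\ref{thm:abelian-units-and-free-subgroups}. We have $(1+x)(1+x^*)-(1+x^*)(1+x)=xx^*-x^*x$. On a representation graph (Lemma~\ref{lem:prescribed-lift}) containing a lift $v_0\to v_1$ of $x$, one has $v_0\cdot xx^*=v_0$ and $v_0\cdot x^*x=0$, so this difference is nonzero. Hence $A^\times$ is nonabelian and cannot be isomorphic to $K^\times$.

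Exhibiting extra commuting units, as your phrasing suggests, would not suffice. For example, $\mathbb C^\times\times(\mathbb C,+)\cong\mathbb C^\times$ as abstract groups, so noncommutativity is really what is needed.

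Finally, your displayed computation of $x^2$ is garbled. The clean version is $x^2=(\mathtt{e}_1\mathtt{v})(\mathtt{u}\mathtt{e}_1)=\mathtt{e}_1(\mathtt{v}\mathtt{u})\mathtt{e}_1=0$.
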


\begin{proof}
By \cite[Proposition 40]{HaP}, the map $\nu$ is a local valuation
on $A$.

$(1)\Rightarrow(2)$. Suppose that $E^0=\{\mathtt{v}\}$. Then $\mathtt{v}=1$, and hence $A=A\mathtt{v}=\mathtt{v}A$. It follows from Definition \ref{def:local valuation}(3) that $\nu(ab)=\nu(a)+\nu(b)$ for all $a,b\in A$.

$(2)\Rightarrow(3)$. Suppose that $\nu$ is a valuation. If $a,b\in A$ are
nonzero, then $\nu(a),\nu(b)\in\mathbb N_0$, and so
$\nu(ab)=\nu(a)+\nu(b)\neq -\infty$. Hence $ab\neq 0$. Thus $A$ is a domain.

$(3)\Rightarrow(1)$. This is precisely \cite[Theorem 41]{HaP} in
the case of LV-graphs: since $K$ is a field and $(E,\omega)$ is an LV-graph,
$L_K(E,\omega)$ is a domain only when $(E,\omega)$ is an LV-rose.

$(1)\Rightarrow(4)$. Write $E^0=\{\mathtt{v}\}$. Then $A$ is unital with identity $\mathtt{v}$.
Let $u\in A^\times$, and let $u^{-1}$ be its inverse. By $(2)$, we have
$$
0=\nu(\mathtt{v})=\nu(uu^{-1})=\nu(u)+\nu(u^{-1}).
$$
Since $u$ and $u^{-1}$ are nonzero, both $\nu(u)$ and $\nu(u^{-1})$ lie in
$\mathbb N_0$. Thus $\nu(u)=\nu(u^{-1})=0$. By the definition of $\nu$, this
means that $\operatorname{supp}(u)\subseteq E^0$ and
$\operatorname{supp}(u^{-1})\subseteq E^0$. Hence $u=r\mathtt{v}$ and $u^{-1}=s\mathtt{v}$ for
some nonzero $r,s\in K$. From $uu^{-1}=u^{-1}u=\mathtt{v}$, we get $rs=sr=1$. Thus
$r\in K^\times$, and $u\in K^\times \mathtt{v}$. The reverse inclusion is immediate, since
$(r\mathtt{v})^{-1}=r^{-1}\mathtt{v}$ for $r\in K^\times$. Therefore
$A^\times=K^\times \mathtt{v}=K^\times 1$.

$(4)\Rightarrow(1)$. Suppose by contradiction that $(E,\omega)$ is
not an LV-rose. If $A$ is not unital, then (4) fails. Thus assume that $A$ is
unital. Then $E^0$ is finite and $|E^0|>1$. Since $E$ is connected, there is a
structured edge $\mathtt{e}$ with $s(\mathtt{e})\neq r(\mathtt{e})$. Choose
$1\leq i\leq \omega(\mathtt{e})$ and put $p=\mathtt{e}_i$. Then $p\neq 0$ and
$p=s(\mathtt{e})p r(\mathtt{e})$. Since $s(\mathtt{e})r(\mathtt{e})=0$, we have $p^2=0$.
Therefore $1+p$ is an invertible element, with inverse $1-p$.

This invertible element is not a scalar multiple of $1$. Indeed, if $1+p=r1$ for some
$r\in K^\times$, then multiplying on the left by $s(\mathtt{e})$ and on the right by
$r(\mathtt{e})$ gives $p=0$, a contradiction. Hence
$A^\times\neq K^\times 1$, so (4) fails. Thus the implication $(4)\Rightarrow(1)$ follows.
\end{proof}

\section{Ordinary Leavitt path algebra subalgebras in weighted roses}
In this section we record the Bergman--Preusser construction for weighted roses. It provides a structural explanation for the one-vertex case: outside the domain cases, a weighted rose contains a unital copy of a noncommutative ordinary Leavitt path algebra. The free subgroup result in this case will then follow from \cite{HK}. Let $R$ be an associative ring with identity. Recall that the $V$-monoid, denoted by $V(R)$, consists of the isomorphism classes of finitely generated projective right $R$-modules. This set forms an abelian monoid under the addition operation induced by the direct sum, namely 
$$
[P] + [Q] := [P \oplus Q]
$$
for all $[P], [Q] \in V(R)$.

\begin{theorem}[{\cite[Theorem~6.2]{B}}]\label{thm:Bergman-6.2-corrected}
Let $K$ be a field, and let $M$ be a finitely generated commutative conical monoid with a distinguished element $I\neq 0$. Assume that for every $x\in M$ there exist $y\in M$ and $n>0$ such that $x+y=nI$.
Then there exists a $K$-algebra $R$, right and left hereditary, such that $V(R)\cong M$ as monoids with distinguished element, with $[R]$ corresponding to $I$.

Moreover, $R$ can be chosen to have the following weak universal property: for every $K$-algebra $B$ and every monoid homomorphism
$$
\varphi:M\to V(B)
$$
such that $\varphi(I)=[B]$, there exists a $K$-algebra homomorphism $f:R\to B$ such that the map induced by extension of scalars
$$
-\otimes_R B:V(R)\to V(B)
$$
is precisely $\varphi$ under the identification $V(R)\cong M$. 
\end{theorem}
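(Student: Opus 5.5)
The statement is Bergman's theorem on realizing conical monoids as $V$-monoids. I would prove it by presenting $M$ by generators and relations and adjoining universal idempotents and isomorphisms, one relation at a time.

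\emph{Step 1: presentation.} Since $M$ is finitely generated and commutative, fix generators $x_1,\dots,x_k$. By Rédei's theorem, $M$ is finitely presented, so it has finitely many relations $\sum_i a_{ji}x_i=\sum_i b_{ji}x_i$ for $j=1,\dots,m$. The hypothesis gives $y_i\in M$ and $n_i>0$ with $x_i+y_i=n_iI$. After enlarging the generating set and the relations, I may assume that every generator is a direct summand of a multiple of $I$, realized explicitly.

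\emph{Step 2: the starting ring.} Take $R_0=M_N(K)$, or more precisely a product of matrix algebras chosen so that $V(R_0)$ is free on the generators. The map sending $x_i$ to a chosen idempotent of rank one in a suitable block, with $I$ going to $[R_0]$, makes this possible, because each $x_i$ is a summand of $n_iI$.

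\emph{Step 3: imposing relations.} For each relation $[P]=[Q]$ between finitely generated projective modules over the current ring $S$, form Bergman's universal ring $S\langle i,i^{-1}:\bar P\cong\bar Q\rangle$, which freely adjoins an isomorphism $P\otimes T\cong Q\otimes T$. Bergman's coproduct theorems give two facts about this ring $T$. First, $V(T)$ is the quotient of $V(S)$ by the congruence generated by $[P]=[Q]$. Second, $T$ is hereditary whenever $S$ is. Iterating this over the finitely many relations gives $R$ with $V(R)\cong M$. Under this isomorphism $[R]\mapsto I$, and conicality of $M$ ensures that no collapse occurs.

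\emph{Step 4: weak universality.} Given $\varphi:M\to V(B)$ with $\varphi(I)=[B]$, proceed in two stages. First, choose idempotent matrices over $B$ representing $\varphi(x_i)$, compatibly with the decomposition of $n_iI$. This defines $R_0\to B$, and it uses $\varphi(I)=[B]$. Second, each relation holds in $V(B)$, so there is an actual isomorphism over $B$, and the universal property of the adjoined isomorphism extends the map stage by stage. By construction, the induced map on $V$ is $\varphi$.

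The main obstacle is Step 3, namely computing $V$ of the universal localization or coproduct and preserving heredity. This relies on Bergman's deep results on coproducts of rings over semisimple bases (Bergman 1974, and Bergman–Dicks). I would cite those results rather than reprove them. In particular, I would invoke the theorem that, for $S$ hereditary, adjoining a universal isomorphism between finitely generated projectives yields a hereditary ring with the expected monoid. Since the present paper quotes this statement from \cite{B}, in context the proof is simply a citation of \cite[Theorem~6.2]{B}.
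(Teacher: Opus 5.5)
The paper gives no proof of this statement. It simply quotes Bergman's Theorem~6.2, so your closing remark agrees with it. The reconstruction you sketch, however, has a genuine gap in Step~2: a product of matrix algebras is the wrong base ring.

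\emph{Why Step~2 fails.} For $R_0=\prod_{j=1}^k M_{N_j}(K)$ one has $V(R_0)=\mathbb N^k$ on the classes $e_j$ of rank-one idempotents, and $[R_0]=\sum_j N_je_j$ with every $N_j\geq 1$. Adjoining universal isomorphisms only passes to quotients of $V$. So the final ring has $V(R)$ generated by the images $\bar e_j$, and $[R]=\sum_j N_j\bar e_j$. Hence Steps~2--3 can only realize pairs $(M,I)$ in which $I$ is a combination, with all coefficients at least $1$, of a generating set of $M$.

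This fails in general. Take $M=\{0,2,3,4,\dots\}\subseteq\mathbb N$ with $I=2$. It is finitely generated and conical, and $I$ is an order unit. But $3$ is an atom, so it must equal some $\bar e_j$, which forces $I\geq 3$. The hypothesis that each $x_i$ is a summand of $n_iI$ is never actually used by a matrix-algebra base. The same problem affects the first stage of Step~4: a unital map $R_0\to B$ needs $[B]=\sum_j N_j\varphi(x_j)$, and nothing guarantees this.

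\emph{The missing ingredient.} You need Bergman's other basic construction: adjoining to a ring $S$ a universal idempotent endomorphism of a free module $S^{n}$, that is, a universal decomposition $S^n\cong P\oplus P'$. This adds two new generators subject only to $[P]+[P']=n[S]$, and it preserves heredity. The construction then runs as follows:
\begin{enumerate}
\item Start from $K$, with $[K]\mapsto I$.
\item For each nonzero generator $x_i$, choose a complement $y_i\neq 0$ with $x_i+y_i=n_iI$; replace $y_i$ by $y_i+I$ if necessary.
\item Adjoin such a universal idempotent on $K^{n_i}$.
\item Only then impose the remaining relations by universal isomorphisms. By conicality, both sides of each relation can be taken nonzero.
\end{enumerate}
In the example, one idempotent on $K^3$ gives $a+b=3I$, and then $a\cong b$ yields $\langle I,a\mid 2a=3I\rangle\cong M$.

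Your first stage of Step~4 (idempotent matrices over $B$ compatible with $\varphi(x_i)+\varphi(y_i)=n_i[B]$) is exactly the universal property of this idempotent-adjoining ring, not of a product of matrix algebras. With that substitution, the rest of your outline is sound. Alternatively, you could realize $(M,\sum_i x_i)$ from $K^k$ and then pass to $\operatorname{End}(P)$ with $[P]=I$ via Morita equivalence. In that route the weak universal property would need a separate check.
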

We next recall Preusser's monoid~\cite{Pre4,Pre1} associated to a weighted graph. 
\begin{definition}\label{defM}
Let $(E,\omega)$ be a weighted graph. For any $\mathtt{v}\in E^0$ write $\omega(s^{-1}(\mathtt{v}))=\{\omega_1(\mathtt{v}),\dots,$ $\omega_{k_{\mathtt{v}}}(\mathtt{v})\}$, where $k_{\mathtt{v}}\geq 0$ and $\omega_1(\mathtt{v})<\dots<\omega_{k_{\mathtt{v}}}(\mathtt{v})$ (hence $k_{\mathtt{v}}$ is the number of different weights of edges in $s^{-1}(\mathtt{v})$). Let $M_{(E,\omega)}$ be the commutative monoid generated by set $\{\mathtt{v},q_1^{\mathtt{v}},\dots,q^{\mathtt{v}}_{k_{\mathtt{v}}-1}\mid \mathtt{v}\in E^0\}$ subject to the relations 
\begin{equation}\label{monrel}
q^{\mathtt{v}}_{i-1}+(\omega_i(\mathtt{v})-\omega_{i-1}(\mathtt{v}))\mathtt{v}=q_i^{\mathtt{v}}+\sum_{\substack{\mathtt{e}\in s^{-1}(\mathtt{v}),\\\omega(\mathtt{e})=\omega_i(\mathtt{v})}}r(\mathtt{e})\quad\quad(\mathtt{v}\in E^0,1\leq i\leq k_{\mathtt{v}})
\end{equation}
where $q^{\mathtt{v}}_0=q^{\mathtt{v}}_{k_{\mathtt{v}}}=\omega_0(\mathtt{v})=0$. 
\end{definition}

It was shown that $V (L_K(E,\omega))) \cong M_{(E,\omega)}$ with $[L_K(E,\omega)]=\sum_{\mathtt{v}\in E^0}\mathtt{v}$ (see~\cite{Pre4,Pre1}). 
\begin{theorem}\label{thm:ordinary-LPA-in-non-LV-rose}
Let $K$ be a field, let $(E,\omega)$ be a weighted rose with unique vertex $\mathtt{v}$. Suppose that $(E,\omega)$ is neither an isolated vertex, nor the rose graph $R_1$, nor an LV-rose. Then there exists a finite ordinary graph $G$ such that $L_K(G)$ is noncommutative and there is a unital embedding
$$
L_K(G)\hookrightarrow L_K(E,\omega).
$$
\end{theorem}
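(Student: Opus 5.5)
The plan is to build $G$ from Preusser's monoid $M_{(E,\omega)}$ and then use a universal property to map $L_K(G)$ into $A=L_K(E,\omega)$. Injectivity will come from simplicity of $L_K(G)$.

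\textbf{Step 1: the relation on $\mathtt{v}$.} Let the distinct weights at $\mathtt{v}$ be $\omega_1<\dots<\omega_k$, let $n_i$ be the number of structured edges of weight $\omega_i$, and put $N=\sum_i n_i=|E^{\mathrm{st}}|$. Every edge has range $\mathtt{v}$, so relation \eqref{monrel} reads
$$q_{i-1}+(\omega_i-\omega_{i-1})\mathtt{v}=q_i+n_i\mathtt{v}.$$
Adding these for $i=1,\dots,k$ and cancelling the telescoping $q$'s (using $q_0=q_k=0$) gives $\omega_k\mathtt{v}=N\mathtt{v}$ in $M_{(E,\omega)}\cong V(A)$. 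Concretely, this is an isomorphism $A^{\omega_k}\cong A^{N}$ of right modules. It is realised by the $\omega_k\times N$ matrix $X=(\mathtt{e}_i)$ and the $N\times\omega_k$ matrix $Y=(\mathtt{e}_i^*)$, whose products are identities by (W3) and (W4).

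\textbf{Step 2: use the hypotheses to get a nontrivial relation.} The excluded cases are exactly the domain cases. The isolated vertex gives $N=0$. $R_1$ gives $N=\omega_1=1$. An LV-rose has all weights at least $2$ and $n_k\ge 2$. From the remaining possibilities I will extract an identity $a\mathtt{v}=b\mathtt{v}$ with $1\le a<b$ that is realised by explicit matrices over $A$.

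\emph{Case $N>\omega_k$.} The relation $\omega_k\mathtt{v}=N\mathtt{v}$ already has this form.

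\emph{Case $N\le\omega_k$.} This includes the unweighted roses $R_n$ with $n\ge2$, which are themselves $L_K(1,n)$ and need nothing further. Otherwise I would use a single relation from \eqref{monrel} at a level where it is strict, for instance the top level $i=k$ when $n_k=1$. Combining it with the relations at lower levels should produce an idempotent in some $M_m(A)$ that is equivalent to a proper subidempotent of itself. That yields $a\mathtt{v}=b\mathtt{v}$ with $a<b$.

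\textbf{Step 3: choose $G$ and map it in.} Given $a\mathtt{v}=b\mathtt{v}$, take for $G$ the standard graph whose Leavitt path algebra is $M_a(L_K(1,b-a+1))$, or more simply the Leavitt algebra type $L_K(a,b)$ analogue realised as an ordinary Leavitt path algebra up to matrices. Concretely, $G$ has one vertex $u$ carrying $b-a+1\ge2$ loops, together with a tail of $a-1$ vertices feeding into $u$. In $V(L_K(G))$ this gives $[L_K(G)]=a[u]$ and $[u]=(b-a+1)[u]$.

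By the Ara--Moreno--Pardo description of $L_K(G)$ as Bergman's universal algebra for the graph monoid $M_G$, which is the weak universal property in Theorem~\ref{thm:Bergman-6.2-corrected}, it suffices to give a monoid map $M_G\to M_{(E,\omega)}$ sending $[L_K(G)]$ to $\mathtt{v}$. Equivalently, I can write down the Cuntz--Krieger families directly in $A$: matrix units coming from the isomorphisms of Step 2. Either way this gives a unital homomorphism $L_K(G)\to A$. The graph $G$ is cofinal, every cycle has an exit, and it has no sinks, so $L_K(G)$ is simple and noncommutative. Hence the homomorphism is injective, because it is unital and therefore nonzero.

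\textbf{Main obstacle.} The hard part is Step 2 in the case $N\le\omega_k$. When $N<\omega_k$ the relation $\omega_k\mathtt{v}=N\mathtt{v}$ goes the "wrong way". Realising $a\mathtt{v}=b\mathtt{v}$ with $a<b$ there requires using the intermediate generators $q_i$ and the failure of the LV condition. I would first try $n_k=1$ or $\omega(\mathtt{e})=1$ for some edge, and read off the relevant idempotent at that level explicitly from (W3) and (W4).
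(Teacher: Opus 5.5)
You have a genuine gap: Steps 2--3 fail for every rose with $k\ge2$ distinct weights, which is the substantive case of the theorem.

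\textbf{Where it breaks.} Your method does cover the one-weight case $k=1$, though not for the reason you give. There the relation $\omega_k\mathtt{v}=N\mathtt{v}$ reads $\mathtt{v}=n_1\mathtt{v}$ (unweighted rose) or $\omega_1\mathtt{v}=\mathtt{v}$ (a single loop of weight $\omega_1\ge2$, i.e.\ $L_K(1,\omega_1)$), so $a=1$. Three smaller points about your case split:
\begin{itemize}
\item $N<\omega_k$ is not ``the wrong way'', since equality is symmetric; the single weighted loop is exactly such a case.
\item $R_n$ has $N=n>1=\omega_k$, so it does not belong to your second case.
\item Cancelling $q_1+\dots+q_{k-1}$ is not legitimate in a non-cancellative monoid. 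Your matrices $X,Y$ do, however, realise $\omega_k\mathtt{v}=N\mathtt{v}$ directly.
\end{itemize}
Now take $k\ge2$. Then $N\ge2$ and $\omega_k\ge2$, so the telescoped relation is either trivial ($N=\omega_k$) or of the form $a\mathtt{v}=b\mathtt{v}$ with $a\ge2$. Such a relation only yields a unital map out of Leavitt's algebra $L_K(a,b)$, which for $a\ge2$ is a domain. It does not yield a unital map out of $M_a(L_K(1,b-a+1))$. For that you need some $x\in M_{(E,\omega)}$ with $ax=\mathtt{v}$ and $x=(b-a+1)x$, and this can fail outright.

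\textbf{Counterexample.} Take weights $2<3$, with five loops of weight $2$ and one of weight $3$; this is not an LV-rose. Then $M_{(E,\omega)}=\langle\mathtt{v},q_1\mid 2\mathtt{v}=q_1+5\mathtt{v},\ q_1+\mathtt{v}=\mathtt{v}\rangle$. Neither side of the first relation fits inside a word containing a single $\mathtt{v}$, so the class of $\mathtt{v}$ is $\{\mathtt{v}+jq_1\mid j\ge0\}$. Hence $\mathtt{v}\ne ax$ for all $a\ge2$, and $\mathtt{v}\ne c\mathtt{v}$ for all $c\ge2$. So no $M_a(L_K(1,c))$ with $c\ge2$ maps unitally into this $L_K(E,\omega)$, and Steps 2--3 cannot succeed here however you complete Step 2.

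\textbf{The missing idea.} Use one level of \eqref{monrel} instead of the telescoped sum, and choose $G$ so that its graph monoid reproduces that level exactly, extra summand included.
\begin{itemize}
\item For $k\ge2$, failure of the LV condition forces $\omega_1=1$ or $n_k=1$. Level $1$ then reads $\mathtt{v}=n_1\mathtt{v}+q_1$, and level $k$ reads $\mathtt{v}=(\omega_k-\omega_{k-1})\mathtt{v}+q_{k-1}$.
\item Either way $\mathtt{v}=m\mathtt{v}+\mathtt{q}$ with $m\ge1$. Also $\mathtt{q}\ne0$, because sending every generator to $\infty$ in $\{0,\infty\}$ defines a monoid map.
\item Take $G$ with $m$ loops at $v_G$, one edge from $v_G$ to a sink $q_G$, and $m-1$ sources each emitting a single edge to $v_G$. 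Then $M_G$ is presented by $z_i=v_G$ and $v_G=mv_G+q_G$, and $[L_K(G)]=v_G$.
\item Bergman's weak universal property now gives a unital map $L_K(G)\to L_K(E,\omega)$.
\end{itemize}

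\textbf{Injectivity.} This must come from the Cuntz--Krieger uniqueness theorem: $G$ satisfies Condition (L), and the vertex images are nonzero because $\mathtt{v},\mathtt{q}\ne0$. It cannot come from simplicity, since $\{q_G\}$ is a proper hereditary saturated set. In the example above $m=1$, so $L_K(G)$ is the Toeplitz algebra. There $\mathtt{v}\ne2\mathtt{v}+z$ for every $z$, so $1$ is not properly infinite. Hence no unital purely infinite simple algebra maps into that $L_K(E,\omega)$, and a non-simple target is unavoidable.
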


\begin{proof}
Write
$$
\omega(s^{-1}(\mathtt{v}))
=\{\omega_1<\cdots<\omega_k\},
$$
and, for $1\leq j\leq k$, let
$$
n_j=\bigl|\{\mathtt{e}\in E^{\mathrm{st}}
\mid \omega(\mathtt{e})=\omega_j\}\bigr|.
$$
Since $(E,\omega)$ is a rose, the relations of
$M_{(E,\omega)}$ given in (\ref{monrel}) take the form
\begin{equation}\label{eq:rose-monoid-relation}
q^{\mathtt{v}}_{j-1}
+(\omega_j-\omega_{j-1})\mathtt{v}
=
q^{\mathtt{v}}_j+n_j\mathtt{v},
\qquad 1\leq j\leq k,
\end{equation}
where $q^{\mathtt{v}}_0=q^{\mathtt{v}}_k=0$ and $\omega_0=0$.

We first consider the case $k=1$. If $\omega_1=1$, then all weights are
trivial, so $L_K(E,\omega)$ is the ordinary Leavitt path algebra of the rose
with $n_1$ petals. Since $(E,\omega)$ is not $R_1$, we have $n_1\geq2$.
Thus the conclusion holds by taking $G$ to be this ordinary rose.

Suppose now that $k=1$ and $\omega_1=N\geq2$. Since $(E,\omega)$ is not
an LV-rose, there is only one structured loop, say $\mathtt{e}$, and
$\omega(\mathtt{e})=N$. Let $G=R_N$ be the ordinary rose with loops
$c_1,\ldots,c_N$. Then the assignments
$$
c_i\longmapsto \mathtt{e}_i^*,
\qquad
c_i^*\longmapsto \mathtt{e}_i,
\qquad
v_G\longmapsto\mathtt{v},
$$
gives 
$$
L_K(G)\cong L_K(E,\omega).
$$
Since $N\geq2$, it follows that $L_K(G)$ is noncommutative.

It remains to consider $k\geq2$. We first observe that
$$
q^{\mathtt{v}}_j\neq0
\qquad\text{for }1\leq j\leq k-1.
$$
Let $\mathcal B=\{0,\infty\}$ be the commutative monoid with
$0+\infty=\infty+0=\infty$ and $\infty+\infty=\infty$. Assign
$$
\mathtt{v}\longmapsto\infty
\quad\text{and}\quad
q^{\mathtt{v}}_j\longmapsto\infty
\quad (1\leq j\leq k-1).
$$
Since $\omega_j-\omega_{j-1}>0$ and $n_j\geq1$, both sides of every
relation \eqref{eq:rose-monoid-relation} are sent to $\infty$. Thus these
assignments induce a monoid homomorphism $M_{(E,\omega)}\longrightarrow\mathcal B$.
In particular, we have that $q^{\mathtt{v}}_j\ne 0$ for all $1\leq j\leq k-1$. We now obtain a relation
\begin{equation}\label{eq:v-mv-q}
\mathtt{v}=m\mathtt{v}+\mathtt{q}\quad\text{with } m\geq1\text{ and }\mathtt{q}\neq0.
\end{equation}
If $\omega_1=1$, relation
\eqref{eq:rose-monoid-relation} for $j=1$ has the form $\mathtt{v}=n_1\mathtt{v}+q^{\mathtt{v}}_1$,
so we take $m=n_1$ and $\mathtt{q}=q^{\mathtt{v}}_1$. If
$\omega_1\geq2$, then $(E,\omega)$ fails to be an LV-rose only because
there is a unique structured edge of maximal weight. Thus $n_k=1$, and
the relation for $j=k$ becomes
$$
q^{\mathtt{v}}_{k-1}
+(\omega_k-\omega_{k-1})\mathtt{v}
=\mathtt{v}.
$$
In this case take
$$
m=\omega_k-\omega_{k-1} 
\quad\text{and}\quad \mathtt{q}=q^{\mathtt{v}}_{k-1}.
$$
Let $G$ be the following ordinary graph:
\begin{figure}[H]
\centering
\begin{tikzpicture}[>=stealth, every node/.style={font=\small}, scale=1.2]

\node (z1) at (-3.2,1.8) {$\mathtt{z}_1$};
\node (z2) at (-3.2,0.6) {$\mathtt{z}_2$};
\node (zd) at (-3.2,-0.6) {$\vdots$};
\node (zm) at (-3.2,-1.8) {$\mathtt{z}_{m-1}$};
\node (v)  at (0,0) {$\mathtt{v}$};
\node (q)  at (1.4,0) {$\mathtt{q}$};

\draw[->] (z1) -- node[above] {$\mathtt{h}_1$} (v);
\draw[->] (z2) -- node[above] {$\mathtt{h}_2$} (v);
\draw[->] (zm) -- node[below] {$\mathtt{h}_{m-1}$} (v);

\draw[->] (v) edge[out=130,in=60,loop] node[above left] {$\mathtt{c}_1$} ();
\draw[->] (v) edge[out=-50,in=-120,loop] node[below left] {$\mathtt{c}_2$} ();
\draw[dotted,->] (v) edge[out=140,in=210,loop] node[left] {$m$-loops} ();

\draw[->] (v) -- node[above] {$\mathtt{f}$} (q);

\end{tikzpicture}
\end{figure}
When $m=1$, the list
$z_1,\ldots,z_{m-1}$ is empty. Its graph monoid has the presentation
$$
M_G=
\left\langle
z_1,\ldots,z_{m-1},v_G,q_G
\ \middle|\
z_i=v_G,\quad v_G=mv_G+q_G
\right\rangle .
$$
Under the identification $V(L_K(G))\cong M_G$, the distinguished element is
$$
\relax[L_K(G)]=z_1+\cdots+z_{m-1}+v_G+q_G=mv_G+q_G=v_G.
$$
Define
$$
\overline{\phi}:M_G\longrightarrow M_{(E,\omega)}
$$
by
$$
\overline{\phi}(z_i)=\mathtt{v},
\qquad
\overline{\phi}(v_G)=\mathtt{v},
\qquad
\overline{\phi}(q_G)=\mathtt{q}.
$$
Relation \eqref{eq:v-mv-q} shows that $\overline{\phi}$ is well-defined.
Moreover,
$$
\overline{\phi}([L_K(G)])
=\mathtt{v}
=[L_K(E,\omega)].
$$
Thus $\overline{\phi}$ is a homomorphism of monoids with distinguished
element. By Theorem \ref{thm:Bergman-6.2-corrected}, there is a unital $K$-algebra homomorphism
$$
\phi:L_K(G)\longrightarrow L_K(E,\omega)
$$
which induces $\overline{\phi}$ on $V$-monoids. The graph $G$ satisfies Condition \textup{(L)}, thus the Cuntz--Krieger uniqueness
theorem therefore implies that $\phi$ is injective; see
\cite[Theorem~2.2.16]{AAS}. Hence $L_K(G)$ embeds unitally into
$L_K(E,\omega)$. Also it is clear that $L_K(G)$ is noncommutative. This completes the proof.
\end{proof}

\begin{corollary}\label{cor:non-lv-rose-free-subgroup}
Let $K$ be a field of characteristic $0$, let $(E,\omega)$ be a weighted graph with
$|E^0|=1$. Suppose that $(E,\omega)$ is not an
LV-rose. If $(E,\omega)$ is neither an isolated vertex nor the rose graph $R_1$, then $L_K(E,\omega)^\times$ contains noncyclic free subgroups.
\end{corollary}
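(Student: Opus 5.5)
The plan is to combine Theorem~\ref{thm:ordinary-LPA-in-non-LV-rose} with the result of Hai and Khanh \cite{HK}. The hypotheses of the corollary say that $(E,\omega)$ is a weighted rose with unique vertex $\mathtt{v}$ which is neither an isolated vertex, nor $R_1$, nor an LV-rose. These are exactly the hypotheses of Theorem~\ref{thm:ordinary-LPA-in-non-LV-rose}. That theorem therefore supplies a finite ordinary graph $G$ and a unital injective $K$-algebra homomorphism $\phi:L_K(G)\hookrightarrow L_K(E,\omega)$ with $L_K(G)$ noncommutative.

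Next, I will check that $L_K(G)$ satisfies the hypotheses of \cite{HK}. It is unital because $G$ is finite, with identity $\sum_{u\in G^0}u$, and it is noncommutative by the theorem. Since $\operatorname{char}K=0$, the theorem of Hai and Khanh gives a non-cyclic free subgroup $F\le L_K(G)^\times$.

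Finally, I will transfer $F$ into $L_K(E,\omega)^\times$. Because $\phi$ is unital, $\phi(1_{L_K(G)})=1_{L_K(E,\omega)}$. Hence for $x\in L_K(G)^\times$ we get $\phi(x)\phi(x^{-1})=\phi(x^{-1})\phi(x)=1$, so $\phi$ restricts to a group homomorphism $L_K(G)^\times\to L_K(E,\omega)^\times$. This restriction is injective because $\phi$ is. Thus $\phi(F)\cong F$ is a non-cyclic free subgroup of $L_K(E,\omega)^\times$, which proves the corollary.

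There is no real obstacle once Theorem~\ref{thm:ordinary-LPA-in-non-LV-rose} is available. The only point to watch is that the embedding is unital, since otherwise units of $L_K(G)$ need not map to units of $L_K(E,\omega)$. Theorem~\ref{thm:ordinary-LPA-in-non-LV-rose} guarantees unitality in every case: the isomorphism cases $k=1$ are trivially unital, and the Bergman construction preserves the distinguished element $[L_K(G)]\mapsto[L_K(E,\omega)]$.
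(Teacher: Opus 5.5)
Your proof is correct and follows the paper's argument: Theorem~\ref{thm:ordinary-LPA-in-non-LV-rose} gives a unital embedding of a noncommutative $L_K(G)$ with $G$ finite, and \cite[Theorem~3.4]{HK} then supplies a non-cyclic free subgroup of $L_K(G)^\times$, which transfers to $L_K(E,\omega)^\times$. You also state explicitly that unitality is what sends units of $L_K(G)$ to units of $L_K(E,\omega)$; the paper's proof leaves this implicit, saying only ``embedded as a subring''.
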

\begin{proof}
It follows from Theorem~\ref{thm:ordinary-LPA-in-non-LV-rose} that there exists a graph $F$ such that $L_K(F)$ is non-commutative and embedded as a subring into $L_K(E,\omega)$. By \cite[Theorem 3.4]{HK}, we conclude that $L_K(F)^\times$ contain a non-cyclic free subgroup, so does $L_K(E,\omega)^\times$.
\end{proof}

The preceding theorem settles the question for weighted roses outside the domain cases. There is another basic class for which the answer is also affirmative. If $(E,\omega)$ satisfies \textup{(LPA)} conditions, then \cite[Theorem~5.2.1]{Pre4} identifies $L_K(E,\omega)$ with an ordinary Leavitt path algebra $L_K(F)$ for a suitable graph $F$. Hence, under the assumption that $L_K(E,\omega)$ is noncommutative, the algebra $L_K(E,\omega)$ itself is a unital copy of a noncommutative ordinary Leavitt path algebra. This suggests the following question.

\begin{question}
Let $(E,\omega)$ be a finite connected weighted graph such that $L_K(E,\omega)$ is noncommutative. Is it true that $L_K(E,\omega)$ contains a unital copy of a noncommutative ordinary Leavitt path algebra if and only if $(E,\omega)$ is not an LV-rose?
\end{question}

\section{Free subgroups in weighted Leavitt path algebras}

In this section we complete the proof of the free-subgroup part of the paper. The case $|E^0|=1$ follows from Corollary~\ref{cor:non-lv-rose-free-subgroup}. It remains to consider the case $|E^0|>1$. Since the graph is connected, there is a structured edge whose source and range are distinct. We shall use representation graphs to lift such an edge and then obtain the two Sanov's matrices from the action on a two-dimensional subspace.

\begin{lemma}\label{lem:prescribed-lift}
Let $(E,\omega)$ be a weighted graph, and let $\widehat E$ be the directed graph associated to $(E,\omega)$. Let $\mathtt{e}_i\in \widehat E^1$, where $\mathtt{e}\in E^{\mathrm{st}}$ and $1\leq i\leq \omega(\mathtt{e})$. Then there is a connected representation graph $(F,\phi)$ of $(E,\omega)$ containing an edge $e_i$ such that $\phi^1(e_i)=\mathtt{e}_i$.
\end{lemma}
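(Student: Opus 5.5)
We construct $F$ inductively as an increasing union of finite partial graphs, repairing the outgoing and incoming conditions of Definition~\ref{definition_VF} one defect at a time; connectivity is then automatic, because every new vertex is attached to the existing graph by an edge.

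Call a pair $(F_n,\phi_n)$, with $\phi_n:F_n\to\widehat E$ a graph homomorphism, a \emph{partial representation graph} if each $v\in F_n^0$ has \emph{at most} one outgoing edge $f$ with $\operatorname{tag}(\phi_n^1(f))=i$ for each $i$, and \emph{at most} one incoming edge $f$ with $\operatorname{st}(\phi_n^1(f))=\mathtt{e}$ for each $\mathtt{e}$. Note that tags of edges out of $v$ automatically lie in $\{1,\dots,\omega(\phi^0(v))\}$, and structured edges of edges into $v$ automatically have range $\phi^0(v)$, because $\phi_n$ is a homomorphism into $\widehat E$.

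Start with $F_0$ consisting of two vertices $x,y$ and one edge $e_i$ from $x$ to $y$. Set $\phi^0(x)=s(\mathtt{e})$, $\phi^0(y)=r(\mathtt{e})$ and $\phi^1(e_i)=\mathtt{e}_i$. If $s(\mathtt{e})=r(\mathtt{e})$ one could also take a loop, but two distinct vertices are harmless. This is clearly a partial representation graph, and it is connected.

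At each stage, enumerate the (countably many) defects, processing them in a fair order such as breadth-first, so that each defect is eventually handled. There are two kinds.
\begin{itemize}
\item \textbf{Outgoing defect.} A pair $(v,j)$ with $1\le j\le\omega(\phi^0(v))$ and no outgoing edge tagged $j$. Choose a structured edge $\mathtt{g}\in s^{-1}(\phi^0(v))$ with $\omega(\mathtt{g})\ge j$, which exists by the definition of $\omega(\phi^0(v))$. Add a new vertex $w$ with $\phi^0(w)=r(\mathtt{g})$ and a new edge $v\to w$ mapped to $\mathtt{g}_j$.
\item \textbf{Incoming defect.} A pair $(v,\mathtt{g})$ with $r(\mathtt{g})=\phi^0(v)$ and no incoming edge with structured edge $\mathtt{g}$. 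Add a new vertex $w$ with $\phi^0(w)=s(\mathtt{g})$ and a new edge $w\to v$ mapped to $\mathtt{g}_1$. This is legitimate since $\omega(\mathtt{g})\ge1$.
\end{itemize}

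The key check is that each step preserves the partial conditions.
\begin{itemize}
\item \textbf{The new vertex $w$.} It has exactly one incident edge. So $w$ has at most one outgoing edge of each tag and at most one incoming edge of each structured type.
\item \textbf{The old vertex $v$.} The new edge fills exactly the missing slot, so no duplicates are created at $v$.
\item \textbf{New defects.} These appear only at $w$: its remaining out-tags and in-types. They are queued for later processing.
\end{itemize}
Let $F=\bigcup_n F_n$ with $\phi=\bigcup_n\phi_n$. This is a graph homomorphism $F\to\widehat E$. By fairness every defect is eventually filled, and a filled slot is never refilled. So both conditions of Definition~\ref{definition_VF} hold with "precisely one". $F$ is connected as a union of an increasing chain of connected graphs, and it contains $e_i$ with $\phi^1(e_i)=\mathtt{e}_i$.

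The only subtle point is the bookkeeping in the step that processes the two kinds of defect. We must make sure the process does not create a second edge of an already filled type. That is why we always attach a \emph{fresh} vertex rather than reusing existing vertices, which could violate the "at most one" condition at the reused vertex. We must also ensure fairness when $F$ is infinite, via a queue or a dovetailed enumeration of $\mathbb N\times\mathbb N$. No finiteness of $F$ is claimed, and none is needed, since $V_F$ may be infinite-dimensional.
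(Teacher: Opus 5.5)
Your proposal is correct and follows essentially the same construction as the paper. You start from a single lifted edge, repair each missing outgoing tag or incoming structured edge by attaching a fresh vertex via $\mathtt{g}_j$ or $\mathtt{g}_1$, keep the ``at most one'' invariant, and take the union. The only difference is that the paper repairs all defects of stage $n$ simultaneously to pass from $F_n$ to $F_{n+1}$. That removes the need for your fair-queue bookkeeping and its implicit countability assumption, which is harmless here since $(E,\omega)$ is finite.
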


\begin{proof}
Put $\mathtt{u}=s(\mathtt{e})$ and $\mathtt{v}=r(\mathtt{e})$. We begin with the smallest possible lift of the edge $\mathtt{e}_i$. Let $F_0$ be the directed graph with two vertices $u,v$ and one edge $e_i$ from $u$ to $v$. Define
$$
\phi_0^0(u)=\mathtt{u},\qquad
\phi_0^0(v)=\mathtt{v},\quad\text{and}\quad
\phi_0^1(e_i)=\mathtt{e}_i.
$$
Then $\phi_0:F_0\to \widehat E$ is a graph homomorphism.

The graph $F_0$ need not satisfy the two conditions in Definition~\ref{definition_VF}. We will enlarge it step by step. At each stage we add exactly the missing edges required by the outgoing and incoming conditions, while keeping uniqueness. That means we construct an increasing sequence of graphs inductively:
$$
(F_0,\phi_0)\subseteq (F_1,\phi_1)\subseteq (F_2,\phi_2)\subseteq\cdots .
$$

Assume that $(F_n,\phi_n)$ has been constructed. For $z\in F_n^0$ and $j\in\mathbb N$, define
$$
O_n(z,j)=
\{h\in F_n^1\mid s_{F_n}(h)=z,\ 
\operatorname{tag}(\phi_n^1(h))=j\}.
$$
Thus $O_n(z,j)$ records the edges in $F_n$ which start at $z$ and have tag $j$. Similarly, for $z\in F_n^0$ and $\mathtt{f}\in E^{\mathrm{st}}$ with $r(\mathtt{f})=\phi_n^0(z)$, define
$$
I_n(z,\mathtt{f})=
\{h\in F_n^1\mid r_{F_n}(h)=z,\ 
\operatorname{st}(\phi_n^1(h))=\mathtt{f}\}.
$$
Thus $I_n(z,\mathtt{f})$ records the edges in $F_n$ which end at $z$ and have structure edge $\mathtt{f}$. We shall make sure that, for every $n$,
\begin{equation}\label{eq:In-On}
|O_n(z,j)|\leq 1\qquad \text{and}\qquad |I_n(z,\mathtt{f})|\leq 1.
\end{equation}
These inequalities hold for $n=0$.

We now identify the defects at stage $n$. The outgoing defects are
$$
D_n^+=\{(z,j)\mid z\in F_n^0,\ 1\leq j\leq \omega(\phi_n^0(z)),
\ O_n(z,j)=\emptyset\}.
$$
The incoming defects are
$$
D_n^-=\{(z,\mathtt{f})\mid z\in F_n^0,\ \mathtt{f}\in E^{\mathrm{st}},
\ r(\mathtt{f})=\phi_n^0(z),\ I_n(z,\mathtt{f})=\emptyset\}.
$$
So $D_n^+$ lists the missing outgoing tags, while $D_n^-$ lists the missing incoming structure edges.

We first repair the outgoing defects. Let $(z,j)\in D_n^+$. Since $1\leq j\leq \omega(\phi_n^0(z))$, there exists a structured edge $\mathtt{f}(z,j)\in E^{\mathrm{st}}$ such that
$$
s(\mathtt{f}(z,j))=\phi_n^0(z),\quad\text{where}\quad
\omega(\mathtt{f}(z,j))\geq j.
$$
Add a new vertex $z_j^+$ and a new edge $f_{z,j}^+$ from $z$ to $z_j^+$. Define
$$
\phi_{n+1}^0(z_j^+)=r(\mathtt{f}(z,j))\quad\text{and}\quad
\phi_{n+1}^1(f_{z,j}^+)=\mathtt{f}(z,j)_j.
$$

We next repair the incoming defects. Let $(z,\mathtt{f})\in D_n^-$. Add a new vertex $z_{\mathtt{f}}^-$ and a new edge $f_{z,\mathtt{f}}^-$ from $z_{\mathtt{f}}^-$ to $z$. Define
$$
\phi_{n+1}^0(z_{\mathtt{f}}^-)=s(\mathtt{f})\quad\text{and}\quad
\phi_{n+1}^1(f_{z,\mathtt{f}}^-)=\mathtt{f}_1.
$$
All new vertices and new edges are taken to be distinct. On the old graph $F_n$, set $\phi_{n+1}=\phi_n$.

We now check that $\phi_{n+1}:F_{n+1}\to\widehat E$ is a graph homomorphism. For an edge $f_{z,j}^+$ added to repair an outgoing defect, we have
$$
\widehat s(\mathtt{f}(z,j)_j)=s(\mathtt{f}(z,j))=\phi_n^0(z)
$$
and
$$
\widehat r(\mathtt{f}(z,j)_j)=r(\mathtt{f}(z,j))=\phi_{n+1}^0(z_j^+).
$$
For an edge $f_{z,\mathtt{f}}^-$ added to repair an incoming defect, we have
$$
\widehat s(\mathtt{f}_1)=s(\mathtt{f})=\phi_{n+1}^0(z_{\mathtt{f}}^-)
$$
and
$$
\widehat r(\mathtt{f}_1)=r(\mathtt{f})=\phi_n^0(z).
$$
Thus the source and range are respected for every new edge, and hence $\phi_{n+1}$ is a graph homomorphism.

We now verify that the inequalities \eqref{eq:In-On} are preserved. Let $z\in F_n^0$ be an old vertex. For each missing outgoing tag $j$, we add exactly one edge starting at $z$ with tag $j$, and we add no edge for a tag already represented in $O_n(z,j)$. Hence $|O_{n+1}(z,j)|\leq 1$. Similarly, for each missing incoming structure edge $\mathtt{f}$, we add exactly one edge ending at $z$ with structure edge $\mathtt{f}$, and we add no edge for a structure edge already represented in $I_n(z,\mathtt{f})$. Hence $|I_{n+1}(z,\mathtt{f})|\leq 1$.

It remains to check the new vertices. First suppose that $z'=z_j^+$. Then $\phi_{n+1}^0(z')=r(\mathtt{f}(z,j))$, and the only edge of $F_{n+1}$ ending at $z'$ is $f_{z,j}^+$. Hence, for any $\mathtt{g}\in E^{\mathrm{st}}$ with $r(\mathtt{g})=\phi_{n+1}^0(z')$,
$$
I_{n+1}(z',\mathtt{g})=
\begin{cases}
\{f_{z,j}^+\},&\text{if }\mathtt{g}=\mathtt{f}(z,j),\\
\emptyset,&\text{if }\mathtt{g}\neq \mathtt{f}(z,j).
\end{cases}
$$
No edge of $F_{n+1}$ starts at $z'$, so
$$
O_{n+1}(z',\ell)=\emptyset
$$
for $1\leq \ell\leq \omega(\phi_{n+1}^0(z'))$. Thus the inequalities \eqref{eq:In-On} hold at $z'$.

Now suppose that $z'=z_{\mathtt{f}}^-$. Then $\phi_{n+1}^0(z')=s(\mathtt{f})$, and the only edge of $F_{n+1}$ starting at $z'$ is $f_{z,\mathtt{f}}^-$. Its image is $\mathtt{f}_1$. Hence, for $1\leq \ell\leq \omega(\phi_{n+1}^0(z'))$,
$$
O_{n+1}(z',\ell)=
\begin{cases}
\{f_{z,\mathtt{f}}^-\},&\text{if }\ell=1,\\
\emptyset,&\text{if }\ell\neq 1.
\end{cases}
$$
No edge of $F_{n+1}$ ends at $z'$, so $I_{n+1}(z',\mathtt{g})=\emptyset$ for every $\mathtt{g}\in E^{\mathrm{st}}$ with $r(\mathtt{g})=\phi_{n+1}^0(z')$. Thus the inequalities \eqref{eq:In-On} also hold at $z'$.

This completes the induction. Put
$$
F=\bigcup_{n\geq 0}F_n\qquad\text{and}\qquad
\phi=\bigcup_{n\geq 0}\phi_n.
$$
Since every new vertex is attached by an edge to a vertex already present, the graph $F$ is connected. The original edge $e_i$ from $u$ to $v$ remains in $F$, and $\phi^1(e_i)=\mathtt{e}_i$.

It remains only to check that $(F,\phi)$ is a representation graph. Let $z\in F^0$, and choose $n$ such that $z\in F_n^0$. If $1\leq j\leq \omega(\phi^0(z))$, then either $O_n(z,j)\neq\emptyset$, or $(z,j)\in D_n^+$. In the second case, the edge $f_{z,j}^+$ is added in $F_{n+1}$. Thus there is at least one edge $h\in s_F^{-1}(z)$ with $\operatorname{tag}(\phi^1(h))=j$. There is at most one such edge because $|O_m(z,j)|\leq 1$ for every finite $m$, and any two edges of $F$ already occur together in some $F_m$.

Similarly, let $\mathtt{f}\in E^{\mathrm{st}}$ satisfy $r(\mathtt{f})=\phi^0(z)$. Then either $I_n(z,\mathtt{f})\neq\emptyset$, or $(z,\mathtt{f})\in D_n^-$. In the second case, the edge $f_{z,\mathtt{f}}^-$ is added in $F_{n+1}$. Thus there is at least one edge $h\in r_F^{-1}(z)$ with $\operatorname{st}(\phi^1(h))=\mathtt{f}$. There is at most one such edge by the same finite-stage argument.

Therefore $(F,\phi)$ satisfies both the outgoing and incoming conditions in Definition~\ref{definition_VF}. Hence $(F,\phi)$ is a connected representation graph of $(E,\omega)$ containing the prescribed lift $e_i$ of $\mathtt{e}_i$.
\end{proof}

We now illustrate the construction in two examples, beginning with the case in which the procedure stops after one step.

\begin{example}\label{ex:prescribed-lift-two-vertices}
Let $(E,\omega)$ be the weighted graph with $E^0=\{\mathtt{u},\mathtt{v}\}$ and $E^{\mathrm{st}}=\{\mathtt{e}\}$ such that $s(\mathtt{e})=\mathtt{u}$, $r(\mathtt{e})=\mathtt{v}$ and $\omega(\mathtt{e})=n$. Then $\widehat E^0=\{\mathtt{u},\mathtt{v}\}$ and $\widehat E^1=\{\mathtt{e}_1,\ldots,\mathtt{e}_n\}$, and for each $i\in\{1,\dots,n\}$, we have $s(\mathtt{e}_i)=\mathtt{u}$ and $r(\mathtt{e}_i)=\mathtt{v}$. The representation graph $(F,\phi)$ is given by
$$
F^0=\{u,v_1,\ldots,v_n\}\quad\text{and}\quad F^1=\{e_1,\ldots,e_n\},
$$
where, for each $1\leq i\leq n$, we define $s(e_i)=u$ and $r(e_i)=v_i$.
Define
$$
\phi^0(u)=\mathtt{u},\qquad \phi^0(v_i)=\mathtt{v},\qquad
\phi^1(e_i)=\mathtt{e}_i,\qquad 1\leq i\leq n.
$$
Then Definition~\ref{definition_VF} is clearly satisfied, and so $(F,\phi)$ is a connected representation graph of $E$. 

\begin{figure}[H]
\centering
\begin{tikzpicture}[>=stealth, node distance=2.5cm]

\node (Eu) at (0,0) {$\mathtt{u}$};
\node (Ev) at (2.8,0) {$\mathtt{v}$};
\draw[->] (Eu) -- node[above] {$\mathtt{e},\ \omega(\mathtt{e})=n$} (Ev);
\node at (1.4,-1.3) {$(E,\omega)$};

\node (Hu) at (5,0) {$\mathtt{u}$};
\node (Hv) at (7.8,0) {$\mathtt{v}$};
\draw[->] (Hu) to[bend left=32] node[above] {$\mathtt{e}_1$} (Hv);
\draw[->, densely dotted] (Hu) to[bend right=12] (Hv);
\node at (6.4,-0.15) {$\vdots$};
\draw[->] (Hu) -- node[above] {$\mathtt{e}_i$} (Hv);
\draw[->] (Hu) to[bend right=32] node[below] {$\mathtt{e}_n$} (Hv);
\node at (6.4,-1.3) {$\widehat E$};

\node (u) at (2.8,-3.0) {$u$};
\node (v1) at (6.0,-1.8) {$v_1$};
\node (vi) at (6.0,-3.0) {$v_i$};
\node (vn) at (6.0,-4.2) {$v_n$};
\node (dots) at (6.0,-3.6) {$\vdots$};

\draw[->] (u) -- node[above] {$e_1$} (v1);
\draw[->, thick] (u) -- node[above] {$e_i$} (vi);
\draw[->] (u) -- node[below] {$e_n$} (vn);

\node at (2.8,-3.55) {$\phi^0(u)=\mathtt{u}$};
\node at (7.7,-3.0) {$\phi^0(v_j)=\mathtt{v}$};
\node at (4.6,-4.95) {$(F,\phi)$};

\end{tikzpicture}
\end{figure}
In terms of the sets $O_n(z,j)$ and $I_n(z,\mathtt{f})$ in the proof of
Lemma~\ref{lem:prescribed-lift}, start with $F_0$ consisting of the two
vertices $u,v_i$ and the edge $e_i$ from $u$ to $v_i$. Thus
$$
\phi_0^0(u)=\mathtt{u},\qquad \phi_0^0(v_i)=\mathtt{v},\qquad
\phi_0^1(e_i)=\mathtt{e}_i.
$$
At the vertex $u$, we have
$$
O_0(u,i)=\{e_i\}\quad\text{and}\quad
O_0(u,j)=\emptyset\quad (1\leq j\leq n,\ j\neq i).
$$
Since $\mathtt{v}$ is a sink, there is no outgoing condition at $v_i$.
For the incoming condition, no structured edge of $E$ ends at $\mathtt{u}$,
while the only structured edge ending at $\mathtt{v}$ is $\mathtt{e}$. Hence
$$
I_0(v_i,\mathtt{e})=\{e_i\}.
$$
Therefore
$$
D_0^+=\{(u,j)\mid 1\leq j\leq n,\ j\neq i\}\quad\text{and}\quad D_0^-=\emptyset.
$$
The first step adds, for every $j\neq i$, a new vertex $v_j$ and an edge $e_j$ from $u$ to $v_j$ such that
$$
\phi_1^0(v_j)=\mathtt{v}\quad\text{and}\quad \phi_1^1(e_j)=\mathtt{e}_j.
$$
Thus, in $F_1$, one has
$$
O_1(u,j)=\{e_j\}\qquad (1\leq j\leq n),
$$
and, for each $1\leq j\leq n$,
$$
I_1(v_j,\mathtt{e})=\{e_j\}.
$$
Again, there is no outgoing condition at any $v_j$, and no incoming condition
at $u$. Hence
$$
D_1^+=D_1^-=\emptyset.
$$
Thus the construction stops after the first step, and $F_1=F$.
\end{example}

\begin{example}\label{ex:prescribed-lift-both-defects}
Let $(E,\omega)$ be the weighted graph with $E^0=\{\mathtt{u},\mathtt{v}\}$ and $E^{\mathrm{st}}=\{\mathtt{e},\mathtt{f}\}$ such that 
$$
s(\mathtt{e})=r(\mathtt{f})=\mathtt{u}\quad\text{and}\quad
r(\mathtt{e})=s(\mathtt{f})=\mathtt{v},
$$
and $\omega(\mathtt{e})=2$ and $\omega(\mathtt{f})=1$.
Thus $\widehat E^1=\{\mathtt{e}_1,\mathtt{e}_2,\mathtt{f}_1\}$. The representation graph $(F,\phi)$ is constructed as follows. We start with the graph $F_0$ having two vertices $u,v$ and one edge $e_1$ from $u$ to $v$, with
$$
\phi_0^0(u)=\mathtt{u},\qquad
\phi_0^0(v)=\mathtt{v},\qquad
\phi_0^1(e_1)=\mathtt{e}_1.
$$
We now compute the sets $O_0$ and $I_0$. In the notations used in the below figure, the initial graph $F_0$ has vertices $u_0,v_1$ and one edge $e_1$ from $u_0$ to $v_1$, with
$$
\phi_0^0(u_0)=\mathtt{u},\qquad
\phi_0^0(v_1)=\mathtt{v},\qquad
\phi_0^1(e_1)=\mathtt{e}_1.
$$
Thus
$$
O_0(u_0,1)=\{e_1\},\qquad O_0(u_0,2)=\emptyset,\qquad
O_0(v_1,1)=\emptyset.
$$
For the incoming condition, the only structured edge ending at $\mathtt{v}$
is $\mathtt{e}$, while the only structured edge ending at $\mathtt{u}$ is
$\mathtt{f}$. Hence
$$
I_0(v_1,\mathtt{e})=\{e_1\}\quad\text{and}\quad I_0(u_0,\mathtt{f})=\emptyset.
$$
Therefore
$$
D_0^+=\{(u_0,2),(v_1,1)\}\quad\text{and}\quad
D_0^-=\{(u_0,\mathtt{f})\}.
$$

Both defect sets are nonempty. To pass from $F_0$ to $F_1$, we add:
\begin{itemize}
\item[$\cdot$] an edge labeled $e_2$ from $u_0$ to $v_2$, with image
$\mathtt{e}_2$ and $\phi_1^0(v_2)=\mathtt{v}$, corresponding to
$(u_0,2)\in D_0^+$;
\item[$\cdot$] an edge labeled $f_1$ from $v_1$ to $u_1$, with image
$\mathtt{f}_1$ and $\phi_1^0(u_1)=\mathtt{u}$, corresponding to
$(v_1,1)\in D_0^+$;
\item[$\cdot$] an edge labeled $f_1$ from $v_{-1}$ to $u_0$, with image
$\mathtt{f}_1$ and $\phi_1^0(v_{-1})=\mathtt{v}$, corresponding to
$(u_0,\mathtt{f})\in D_0^-$.
\end{itemize}
Thus $F_1$ has the form
$$
v_{-1}\xrightarrow{f_1}u_0,\qquad
u_0\xrightarrow{e_1}v_1,\qquad
u_0\xrightarrow{e_2}v_2,\qquad
v_1\xrightarrow{f_1}u_1.
$$

At this stage the relevant sets are
$$
O_1(u_0,1)=\{e_1\},\qquad O_1(u_0,2)=\{e_2\},
$$
$$
O_1(v_{-1},1)=\{f_1\},\qquad O_1(v_1,1)=\{f_1\},
$$
while
$$
O_1(v_2,1)=\emptyset,\qquad
O_1(u_1,1)=O_1(u_1,2)=\emptyset.
$$
For incoming edges one has
$$
I_1(u_0,\mathtt{f})=\{f_1\},\qquad
I_1(u_1,\mathtt{f})=\{f_1\},
$$
$$
I_1(v_1,\mathtt{e})=\{e_1\},\qquad
I_1(v_2,\mathtt{e})=\{e_2\},\qquad
I_1(v_{-1},\mathtt{e})=\emptyset.
$$
Hence $D_1^+$ and $D_1^-$ are still nonempty. Iterating this procedure
yields a connected representation graph $F=\bigcup_{n\geq 0}F_n$. 
Pictorially, we have

\begin{figure}[H]
\centering

\begin{minipage}{0.42\textwidth}
\centering
\begin{tikzpicture}[>=stealth, every node/.style={font=\small}]

\node (u) at (0,0) {$\mathtt{u}$};
\node (v) at (3,0) {$\mathtt{v}$};

\draw[->,blue] (u) to[bend left=20]
node[above] {$\mathtt{e},\ \omega(\mathtt{e})=2$} (v);
\draw[->,red!75!black] (v) to[bend left=20]
node[below] {$\mathtt{f},\ \omega(\mathtt{f})=1$} (u);

\node at (1.5,-1.5) {$(E,\omega)$};
\end{tikzpicture}
\end{minipage}
\hfill
\begin{minipage}{0.42\textwidth}
\centering
\begin{tikzpicture}[>=stealth, every node/.style={font=\small}]

\node (u) at (0,0) {$u$};
\node (v) at (3,0) {$v$};

\draw[->,blue,thick] (u) -- node[above] {$e_1$} (v);

\node at (0,-0.8) {$\phi_0^0(u)=\mathtt{u}$};
\node at (3,-0.8) {$\phi_0^0(v)=\mathtt{v}$};
\node at (1.5,-1.8) {$F_0$};
\node at (1.5,-2.8) {$D_0^+=\{(u,2),(v,1)\}$};
\node at (1.5,-3.5) {$D_0^-=\{(u,\mathtt{f})\}$};

\end{tikzpicture}
\end{minipage}

\vspace{1.8em}

\begin{minipage}{0.96\textwidth}
\centering
\begin{tikzpicture}[>=stealth, every node/.style={font=\small}]

\node (um1)  at (-3.8,0)   {$u_{-1}$};
\node (vm1)  at (-2.0,0.9) {$v_{-1}$};
\node (vpm1) at (-2.0,-0.9) {$v_{-1}'$};
\node (u0)   at (0,0)      {$u_0$};
\node (v1)   at (2.0,1.0)  {$v_1$};
\node (v2)   at (2.0,-1.0) {$v_2$};
\node (u1)   at (4.0,1.0)  {$u_1$};
\node (u2)   at (4.0,-1.0) {$u_2$};

\draw[->,blue] (um1) -- node[above] {$e_1$} (vm1);
\draw[->,blue] (um1) -- node[below] {$e_2$} (vpm1);
\draw[->,red!75!black] (vm1) -- node[above] {$f_1$} (u0);

\draw[->,blue,thick] (u0) -- node[above] {$e_1$} (v1);
\draw[->,blue] (u0) -- node[below] {$e_2$} (v2);
\draw[->,red!75!black] (v1) -- node[above] {$f_1$} (u1);
\draw[->,red!75!black] (v2) -- node[below] {$f_1$} (u2);

\draw[->,red!75!black,densely dotted] (-5.6,0.7) -- node[above] {$f_1$} (um1);
\draw[->,red!75!black,densely dotted] (vpm1) -- node[below] {$f_1$} (-0.1,-1.9);

\draw[->,blue,densely dotted] (u1) -- node[above] {$e_1$} (5.9,1.9);
\draw[->,blue,densely dotted] (u1) -- node[below] {$e_2$} (5.9,0.25);

\draw[->,blue,densely dotted] (u2) -- node[above] {$e_1$} (5.9,-0.25);
\draw[->,blue,densely dotted] (u2) -- node[below] {$e_2$} (5.9,-1.9);

\node at (1.5,-2.5) {$(F,\phi)$};
\end{tikzpicture}
\end{minipage}

\label{fig:prescribed-lift-both-defects}
\end{figure}
\end{example}

Next, we will use the following classical result of Sanov. In \cite{S},
Sanov proved that the subgroup of $\operatorname{GL}_2(\mathbb Z)$ generated by
$$
U=\begin{pmatrix}1&2\\0&1\end{pmatrix}\quad\text{and}\quad
V=\begin{pmatrix}1&0\\2&1\end{pmatrix}
$$
is a free group of rank two (see also \cite[Example~25, p.~26]{dH}).

\begin{proposition}\label{prop:nonloop-edge-free-subgroup}
Let $K$ be a field of characteristic $0$, let $(E,\omega)$ be a
weighted graph. Suppose that there is a structured
edge $\mathtt{f}\in E^{\mathrm{st}}$ such that
$s(\mathtt{f})\neq r(\mathtt{f})$. Then $L_K(E,\omega)^\times$ contains a non-cyclic
free subgroup.
\end{proposition}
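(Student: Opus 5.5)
Write $\mathtt{u}=s(\mathtt{f})$ and $\mathtt{v}=r(\mathtt{f})$, and put $p=\mathtt{f}_1$ and $q=\mathtt{f}_1^*$. Since $p=\mathtt{u}p\mathtt{v}$ and $\mathtt{u}\mathtt{v}=0$, we get $p^2=0$; likewise $q=\mathtt{v}q\mathtt{u}$ gives $q^2=0$. Hence
$$
x=1+2p \qquad\text{and}\qquad y=1+2q
$$
are units of $L_K(E,\omega)$, with inverses $1-2p$ and $1-2q$. The plan is to show that $\langle x,y\rangle$ is free of rank two. It suffices to find a representation of $L_K(E,\omega)$ in which $x$ and $y$ act on an invariant two-dimensional subspace by Sanov's matrices (or their transposes). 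Any relation among $x,y$ in $L_K(E,\omega)^\times$ would then give the same relation among $U,V$, contradicting Sanov's theorem. Characteristic $0$ is used here: $\mathbb Z\subseteq K$, and the $2\times2$ matrices over $\mathbb Z$ embed in $\operatorname{GL}_2(K)$.

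For the representation, apply Lemma~\ref{lem:prescribed-lift} to $\mathtt{f}_1$. This gives a connected representation graph $(F,\phi)$ with an edge $f_1$ from a vertex $a$ to a vertex $b$, where $\phi^0(a)=\mathtt{u}$, $\phi^0(b)=\mathtt{v}$ and $\phi^1(f_1)=\mathtt{f}_1$. Take the module $V_F$ with action $a\cdot w=\pi(a)(w)$. Since $\pi$ lands in $\operatorname{End}_K(V_F)^{\mathrm{op}}$, the map $g\mapsto\pi(g)$ is a group antihomomorphism on units. Composing with inversion, or simply tracking order reversal, still turns any nontrivial reduced word relation in $x,y$ into one in the corresponding matrices, so freeness transfers.

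Next compute the actions on $W=\operatorname{span}_K\{a,b\}$. By the outgoing condition, $f_1$ is the unique edge out of $a$ with tag $1$, so $\sigma_{\mathtt{f}_1}(a)=b$. Also $\sigma_{\mathtt{f}_1}(b)=0$, because $\phi^0(b)=\mathtt{v}\neq\mathtt{u}=s(\mathtt{f})$. By the incoming condition, $f_1$ is the unique edge into $b$ with structured edge $\mathtt{f}$, so $\sigma_{\mathtt{f}_1^*}(b)=a$. Finally $\sigma_{\mathtt{f}_1^*}(a)=0$, since $\phi^0(a)=\mathtt{u}\neq r(\mathtt{f})$.

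Thus $W$ is invariant under $\pi(x)$ and $\pi(y)$. In the basis $(a,b)$, $\pi(x)|_W=\begin{pmatrix}1&0\\2&1\end{pmatrix}=V$ and $\pi(y)|_W=\begin{pmatrix}1&2\\0&1\end{pmatrix}=U$. The map $g\mapsto(\pi(g)|_W)^{-1}$ is then a group homomorphism from $\langle x,y\rangle$ onto $\langle U^{-1},V^{-1}\rangle=\langle U,V\rangle$, and $\langle U,V\rangle$ is free of rank two. The generators $x,y$ map to a free basis of a free group, so this homomorphism is an isomorphism, and $\langle x,y\rangle$ is a non-cyclic free subgroup.

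The step I expect to need the most care is the justification that $\sigma_{\mathtt{f}_1}(b)=0$ and $\sigma_{\mathtt{f}_1^*}(a)=0$. One must check that no edge of $F$ out of $b$ maps to $\mathtt{f}_1$; this holds because $\phi$ is a graph homomorphism and $\phi^0(b)=\mathtt{v}\neq s(\mathtt{f})$. This is exactly where the hypothesis $s(\mathtt{f})\neq r(\mathtt{f})$ is used. One must also confirm that $\pi$ is well defined as stated in \cite{HPS}.
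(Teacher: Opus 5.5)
Your proposal is correct and follows essentially the same route as the paper. Both arguments use the unipotent units $1+2\mathtt{f}_i$ and $1+2\mathtt{f}_i^*$, a representation graph from Lemma~\ref{lem:prescribed-lift} lifting $\mathtt{f}_i$, and Sanov's matrices acting on the span of the two endpoint vertices. The only difference is bookkeeping: the paper uses row coordinates so that the right action is directly a homomorphism into $\operatorname{GL}_2(K)$, while you use column matrices and compose with inversion to undo the order reversal coming from $\operatorname{End}_K(V_F)^{\mathrm{op}}$.
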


\begin{proof}
Put $A=L_K(E,\omega)$. Choose $1\leq i\leq\omega(\mathtt{f})$, and put $\mathtt{v}_0=s(\mathtt{f})$, $\mathtt{v}_1=r(\mathtt{f})$ and $a=\mathtt{f}_i$.
Thus $\mathtt{v}_0\neq \mathtt{v}_1$, and we get $a^2=(a^*)^2=0$. It
follows that
$$
U=1+2a\quad\text{and}\quad W=1+2a^*
$$
are in $A^\times$, with $U^{-1}=1-2a$ and $W^{-1}=1-2a^*$ respectively.

By Lemma~\ref{lem:prescribed-lift}, there is a connected representation graph
$(F,\phi)$ of $E$ and an edge $f_i$ from $v_0$ to $v_1$ in $F$ such that $\phi^1(f_i)=\mathtt{f}_i$. Then $\phi^0(v_0)=\mathtt{v}_0$ and $\phi^0(v_1)=\mathtt{v}_1$.

Let $V_F$ be the right $A$-module associated to $(F,\phi)$, with basis
$F^0$. 
Since $s(f_i)=v_0$ and $r(f_i)=v_1$ and $\phi^1(f_i)=\mathtt{f}_i$, it is straightforward to check that 
$$
v_0\cdot a=v_1,\qquad v_1\cdot a^*=v_0, \qquad v_1\cdot a=0, \quad\text{ and }\quad v_0\cdot a^*=0.
$$

Set $\mathcal W=Kv_0\oplus Kv_1$, and for
$\xi=\lambda v_0+\mu v_1\in\mathcal W$, write $[\xi]_{(v_0,v_1)}=(\lambda,\mu)$
for its row coordinate vector with respect to the ordered basis $(v_0,v_1)$.
For $r\in A$ such that $\mathcal W \cdot r \subseteq \mathcal W$, let $M_r\in \mathrm{M}_2(K)$ be
the matrix of the right action of $r$ on $\mathcal W$, determined by
$$
[\xi\cdot r]_{(v_0,v_1)}=[\xi]_{(v_0,v_1)}M_r,\quad\text{where}\quad\xi\in\mathcal W.
$$
Since $v_0\cdot U=v_0+2v_1$ and $v_1\cdot U=v_1$, we have
$$
(1,0)M_U=(1,2)\qquad\text{and}\qquad (0,1)M_U=(0,1).
$$
Hence
$$
M_U=
\begin{pmatrix}
1&2\\
0&1
\end{pmatrix}.
$$
Similarly,
$$
M_W=
\begin{pmatrix}
1&0\\
2&1
\end{pmatrix}.
$$
Thus, with respect to row coordinates on the ordered basis $(v_0,v_1)$, the
restrictions of the right actions of $U$ and $W$ to $\mathcal W$ are presented by the following matrices:
$$
M_U=
\begin{pmatrix}
1&2\\
0&1
\end{pmatrix}
\quad\text{and}\quad
M_W=
\begin{pmatrix}
1&0\\
2&1
\end{pmatrix}.
$$

Since $\operatorname{char}K=0$, Sanov's theorem \cite{S} applies
to these matrices: the subgroup
$\langle M_U,M_W\rangle\leq \operatorname{GL}_2(K)$ is free of rank two.
Let $F_2=\langle X,Y\rangle$ be the free group of rank two on $X$ and $Y$.
Since $U,W\in A^\times$, the universal property of $F_2$ gives a
unique group homomorphism $\theta:F_2\longrightarrow A^\times$ such that $\theta(X)=U$ and $\theta(Y)=W$.
Thus, if $w(X,Y)\in F_2$ is a reduced word in $X$ and $Y$, then $\theta(w(X,Y))=w(U,W)$.
In particular, we have $\theta(1_{F_2})=1$.

Let $w(X,Y)\in F_2$ be nontrivial reduced word in $X$ and $Y$. As $M_U$ and $M_W$ are the generators of a free group of rank 2, we get that $w(M_U,M_W)\neq I_2$. On the other hand, the action of $w(U,W)$ on $\mathcal W$, with respect to
row coordinates in the ordered basis $(v_0,v_1)$, is given by the matrix
$w(M_U,M_W)$. Hence $w(U,W)\neq 1$, since $1$ acts on
$\mathcal W$ as $I_2$. Therefore no nontrivial reduced word lies in
$\ker\theta$, and so $\ker\theta=\{1\}$. Hence
$\langle U,W\rangle$ is a free group of rank two in $A^\times$.
\end{proof}

\begin{corollary}\label{cor:multivertex-free-subgroup}
Let $K$ be a field of characteristic $0$, let $(E,\omega)$ be a
weighted graph. If $|E^0|>1$, then $L_K(E,\omega)^\times$
contains a non-cyclic free subgroup.
\end{corollary}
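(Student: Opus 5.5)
The plan is to reduce the corollary directly to Proposition~\ref{prop:nonloop-edge-free-subgroup}. That proposition only needs one structured edge $\mathtt{f}\in E^{\mathrm{st}}$ with $s(\mathtt{f})\neq r(\mathtt{f})$. So it suffices to show that such an edge exists whenever $|E^0|>1$, using the standing assumption that every weighted graph in the paper is finite and connected.

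For the existence step, I argue by contradiction. Suppose every structured edge is a loop, so $s(\mathtt{e})=r(\mathtt{e})$ for all $\mathtt{e}\in E^{\mathrm{st}}$. Then no edge joins two distinct vertices. Fix $\mathtt{v}\in E^0$. Any undirected walk starting at $\mathtt{v}$ uses only loops, so it never leaves $\mathtt{v}$. Since $|E^0|>1$, there is a vertex $\mathtt{w}\neq\mathtt{v}$, and no walk connects $\mathtt{v}$ to $\mathtt{w}$. This contradicts connectedness. Hence some $\mathtt{f}\in E^{\mathrm{st}}$ satisfies $s(\mathtt{f})\neq r(\mathtt{f})$. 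This is the same observation already used in the proof of $(4)\Rightarrow(1)$ in Theorem~\ref{thm:equiv-units-lv-domain}.

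With this edge in hand, and since $\operatorname{char}K=0$, Proposition~\ref{prop:nonloop-edge-free-subgroup} applies. It gives the free subgroup $\langle 1+2\mathtt{f}_i,\,1+2\mathtt{f}_i^*\rangle$ of rank two in $L_K(E,\omega)^\times$. Finiteness ensures that $1=\sum_{\mathtt{v}\in E^0}\mathtt{v}$ exists, so these elements are genuine units.

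I expect no real obstacle here. The only point needing care is that connectedness is meant in the undirected sense. This is still enough, because an edge with distinct source and range serves the proposition equally well whichever direction it points.
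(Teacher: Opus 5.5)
Your proposal is correct and follows the paper's own (implicit) argument. Connectedness of the finite graph with $|E^0|>1$ yields a structured edge $\mathtt{f}$ with $s(\mathtt{f})\neq r(\mathtt{f})$, and Proposition~\ref{prop:nonloop-edge-free-subgroup} then gives the rank-two free subgroup $\langle 1+2\mathtt{f}_i,\,1+2\mathtt{f}_i^*\rangle$, exactly as the paper does in Section~4 and in the proof of Theorem~\ref{thm:abelian-units-and-free-subgroups}.
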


\begin{remark}\label{rem:positive-characteristic}
The assumption $\operatorname{char}K=0$ in
Proposition~\ref{prop:nonloop-edge-free-subgroup} is used only to invoke
Sanov's theorem. There is an analogous statement in positive characteristic.
Indeed, if $\operatorname{char}K=p>0$ and $K$ contains two elements
$\lambda,\mu$ algebraically independent over the field having $p$ elements, then
\cite[Lemma~2.8, p.~30, and Exercises~2.2--2.3, p.~31]{W}
implies that
$$
C=\begin{pmatrix}\mu&0\\ \lambda&\mu^{-1}\end{pmatrix}
\quad\text{and}\quad
D=\begin{pmatrix}\mu&\lambda\\ 0&\mu^{-1}\end{pmatrix}
$$
generate a free subgroup of rank two in $\operatorname{GL}_2(K)$.
Thus the same proof applies after replacing the Sanov matrices by the following
ones. In the notation of Proposition~\ref{prop:nonloop-edge-free-subgroup},
where $a=\mathtt{f}_i$, $\mathtt{v}_0=s(\mathtt{f})$, and
$\mathtt{v}_1=r(\mathtt{f})$, put
$$
U_{\lambda,\mu}
=
1-\mathtt{v}_0-\mathtt{v}_1
+\mu\mathtt{v}_0+\lambda a+\mu^{-1}\mathtt{v}_1,
$$
and
$$
W_{\lambda,\mu}
=
1-\mathtt{v}_0-\mathtt{v}_1
+\mu\mathtt{v}_0+\lambda a^*+\mu^{-1}\mathtt{v}_1.
$$
A direct calculation shows that these elements are units, with
$$
U_{\lambda,\mu}^{-1}
=
1-\mathtt{v}_0-\mathtt{v}_1
+\mu^{-1}\mathtt{v}_0-\lambda a+\mu\mathtt{v}_1,
$$
and
$$
W_{\lambda,\mu}^{-1}
=
1-\mathtt{v}_0-\mathtt{v}_1
+\mu^{-1}\mathtt{v}_0-\lambda a^*+\mu\mathtt{v}_1.
$$
On the space $Kv_0\oplus Kv_1$, these elements act, respectively,
as $D$ and $C$. Hence they generate a free subgroup of rank two. Consequently,
the conclusions of Proposition~\ref{prop:nonloop-edge-free-subgroup}
remain valid over such
fields of positive characteristic.
\end{remark}

\section{Weighted Leavitt path algebras with abelian unit groups}

We now prove the classification theorem. The domain cases are supplied by the classification given in \cite{HaP}. Outside these cases, it was proved in the preceding sections that, in characteristic $0$, non-cyclic free subgroups always exist in $L_K(E,\omega)^\times$.

\begin{theorem}\label{thm:abelian-units-and-free-subgroups}
Let $K$ be a field with $\mathrm{char} K =0 $, let $(E,\omega)$ be a weighted graph,
and put $A=L_K(E,\omega)$. Then the following statements are equivalent:
\begin{enumerate}[label=\textup{(\arabic*)}]
\item $A^\times$ is abelian.
\item $A$ is a domain.
\item One of the following holds:
\begin{enumerate}[label=\textup{(\alph*)}]
\item $E^0=\{\mathtt{v}\}$ and $E^{\mathrm{st}}=\emptyset$;
\item $E^0=\{\mathtt{v}\}$, $E^{\mathrm{st}}=\{\mathtt{e}\}$, and
$\omega(\mathtt{e})=1$;
\item $(E,\omega)$ is an LV-rose.
\end{enumerate}
\item One of the following holds:
\begin{enumerate}[label=\textup{(\alph*)}]
\item $A=K\mathtt{v}$, and so $A^\times\cong K^\times$;
\item $A\cong K[x,x^{-1}]$, and so
$A^\times=K^\times\langle x\rangle$, where $\langle x\rangle$ denotes the cyclic subgroup of $K[x,x^{-1}]^\times$ generated by $x$;
\item $E^0=\{\mathtt{v}\}$, $(E,\omega)$ is an LV-rose, and
$A^\times\cong K^\times$.
\end{enumerate}
\item $A^\times$ contains no non-cyclic free subgroup.
\end{enumerate}
\end{theorem}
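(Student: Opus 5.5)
I would prove the equivalences by running the cycle $(2)\Rightarrow(3)\Rightarrow(4)\Rightarrow(1)\Rightarrow(5)\Rightarrow(2)$. Throughout, $(E,\omega)$ is finite and connected, so $A$ is unital.

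\textbf{$(2)\Rightarrow(3)$.} This is exactly the domain classification of Hazrat and Preusser \cite{HaP}. For a finite connected weighted graph, $L_K(E,\omega)$ is a domain only in the three listed cases. A quick sanity check is already available from the paper: Proposition~\ref{prop:nonloop-edge-free-subgroup} supplies $a=\mathtt{f}_i$ with $a^2=0$ whenever $|E^0|>1$, so a domain forces $|E^0|=1$. After that, Theorem~\ref{thm:equiv-units-lv-domain} handles the LV-graphs, and I would cite \cite{HaP} for the remaining roses.

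\textbf{$(3)\Rightarrow(4)$.} I would treat the three cases separately.
\begin{enumerate}[label=(\alph*)]
\item With no edges, $A=K\mathtt{v}\cong K$.
\item With one loop of weight $1$, $A$ is the ordinary Leavitt path algebra of $R_1$. This is $K[x,x^{-1}]$ with $x=\mathtt{e}_1$ and $x^{-1}=\mathtt{e}_1^*$. Its units are the nonzero monomials $\lambda x^n$, i.e.\ $K^\times\langle x\rangle$, which follows from a degree argument in Laurent polynomials.
\item For an LV-rose, Theorem~\ref{thm:equiv-units-lv-domain} gives $A^\times\cong K^\times$.
\end{enumerate}

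\textbf{$(4)\Rightarrow(1)$ and $(1)\Rightarrow(5)$.} Both are immediate. In every case of (4), $A^\times$ is a subgroup of a commutative ring's unit group, hence abelian. An abelian group contains no non-cyclic free subgroup, since $F_2$ is nonabelian.

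\textbf{$(5)\Rightarrow(2)$.} This is the substantive direction, and I expect it to be the main obstacle only in assembling the cases correctly. I argue by contrapositive: assume $A$ is not a domain.
\begin{itemize}
\item If $|E^0|>1$, connectedness gives a structured edge with distinct source and range. Corollary~\ref{cor:multivertex-free-subgroup}, via Proposition~\ref{prop:nonloop-edge-free-subgroup} and Sanov's theorem in characteristic $0$, then yields a non-cyclic free subgroup.
\item If $|E^0|=1$, then since $A$ is not a domain, $(E,\omega)$ is not one of (a), (b), (c), because those are domains. Here I use the easy direction: $K$ and $K[x,x^{-1}]$ are domains, and LV-roses are domains by Theorem~\ref{thm:equiv-units-lv-domain}. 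Corollary~\ref{cor:non-lv-rose-free-subgroup} then applies directly and gives a non-cyclic free subgroup.
\end{itemize}
Thus $\neg(2)\Rightarrow\neg(5)$. The one point to check carefully is that the hypotheses of Corollary~\ref{cor:non-lv-rose-free-subgroup} (not an isolated vertex, not $R_1$, not an LV-rose) are exactly the negation of (3) when $|E^0|=1$. This matches the case split in Theorem~\ref{thm:ordinary-LPA-in-non-LV-rose}.

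Combining the implications closes the cycle.
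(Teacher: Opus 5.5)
Your proof is correct. It uses the same ingredients as the paper: \cite[Theorem~41]{HaP}, Theorem~\ref{thm:equiv-units-lv-domain}, Proposition~\ref{prop:nonloop-edge-free-subgroup} and Corollary~\ref{cor:non-lv-rose-free-subgroup}. The difference is only in how the implications are chained.

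The paper proves $(1)\Rightarrow(3)$ on its own. For $|E^0|>1$ it shows directly that the units $1+a$ and $1+a^*$ do not commute, since $v_0\cdot aa^*=v_0$ while $v_0\cdot a^*a=0$ in the representation module. For $|E^0|=1$ it invokes Corollary~\ref{cor:non-lv-rose-free-subgroup}. It then handles $(5)$ in a separate paragraph, splitting again into $|E^0|>1$ and $|E^0|=1$.

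Your single cycle goes through the trivial step $(1)\Rightarrow(5)$ and then gets $(5)\Rightarrow(2)$ from the free-subgroup results. This removes the duplicated case split and makes the commutator computation unnecessary, so it is the more economical organisation.

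The paper's detour does buy something: for $|E^0|>1$ it exhibits non-commuting units without Sanov's theorem, hence over any field. However, the one-vertex case still rests on \cite{HK} and characteristic~$0$, so this does not enlarge the scope of the theorem as stated.

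Your use of $(3)\Rightarrow(2)$ inside $(5)\Rightarrow(2)$ is not circular, because you verify it directly: $K$ and $K[x,x^{-1}]$ are domains, and Theorem~\ref{thm:equiv-units-lv-domain} covers LV-roses.

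Two cosmetic points:
\begin{enumerate}
\item In your sanity check you should record $a\neq 0$ (e.g.\ $v_0\cdot a=v_1$) before using $a^2=0$.
\item In case (4)(c) the algebra $A$ is not commutative. The correct justification for $(4)\Rightarrow(1)$ there is simply $A^\times=K^\times\mathtt{v}\cong K^\times$, not that $A^\times$ sits inside a commutative ring's unit group.
\end{enumerate}
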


\begin{proof}
The equivalence $(2)\Leftrightarrow(3)$ is exactly \cite[Theorem~41]{HaP}.
Thus $A$ is a domain precisely in the three cases listed in $(3)$.

We prove $(3)\Rightarrow(4)$. In case $(3)(a)$, $A=K\mathtt{v}$, hence
$A^\times=K^\times\mathtt{v}$. In case $(3)(b)$, relations (W3) and (W4) implies that $\mathtt{e}_1\mathtt{e}_1^*=\mathtt{v}=\mathtt{e}_1^*\mathtt{e}_1$.
Hence the mapping $x\mapsto \mathtt{e}_1$ induces an isomorphism
$K[x,x^{-1}]\cong A$, and $A^\times=K^\times\mathtt{e}_1^{\mathbb Z}$.
In case $(3)(c)$, by Theorem~\ref{thm:equiv-units-lv-domain},
$A^\times=K^\times\mathtt{v}$. Hence $(3)\Rightarrow(4)$. Since the
groups in $(4)$ are abelian, $(4)\Rightarrow(1)$.

It remains to prove $(1)\Rightarrow(3)$. 
Assume by contradiction that $(E,\omega)$ is not one of the three graphs in $(3)$.
First suppose that $|E^0|>1$. Since $E$ is connected, there is
$\mathtt{f}\in E^{\mathrm{st}}$ with
$s(\mathtt{f})\neq r(\mathtt{f})$. Choose
$1\leq i\leq\omega(\mathtt{f})$, and put
$$
a=\mathtt{f}_i,\qquad
\mathtt{v}_0=s(\mathtt{f}),\qquad\text{and}\qquad
\mathtt{v}_1=r(\mathtt{f}).
$$
Then $\mathtt{v}_0\neq\mathtt{v}_1$, and so $a^2=0=(a^*)^2$. Hence
$$
U=1+a\quad\text{and}\quad W=1+a^*
$$
are invertible in $A$, with inverses $1-a$ and $1-a^*$.

By Lemma~\ref{lem:prescribed-lift}, there is a representation graph
$(F,\phi)$ containing an edge $f_i$ from $v_0$ to $v_1$ such that
$\phi^1(f_i)=\mathtt{f}_i$. Let $V_F$ be the right
$A$-module associated to $(F,\phi)$. Since
$$
\widehat s(\mathtt{f}_i)=\mathtt{v}_0\quad\text{and}\quad
\widehat r(\mathtt{f}_i)=\mathtt{v}_1,
$$
the action on $V_F$ implies that
$$
v_0\cdot a=v_1,\qquad v_1\cdot a^*=v_0,\qquad
v_1\cdot a=0,\qquad v_0\cdot a^*=0.
$$
Consequently
$$
v_0\cdot aa^*=(v_0\cdot a)\cdot a^*=v_1\cdot a^*=v_0 \qquad \text{whereas} \qquad v_0\cdot a^*a=(v_0\cdot a^*)\cdot a=0.
$$
Thus $aa^*\neq a^*a$ and so $UW\neq WU$, which implies that $A^\times$ is not abelian.

Now suppose that $|E^0|=1$. Since $(E,\omega)$ is not one of the graphs in $(3)$, it is neither an isolated vertex, nor $R_1$, nor an LV-rose. By Corollary~\ref{cor:non-lv-rose-free-subgroup}, $A^\times$ contains a non-cyclic free subgroup. In particular, $A^\times$ is not abelian. This
proves that $(1)\Rightarrow(3)$, and hence $(1)$--$(4)$ are equivalent.

If the equivalent conditions
$(1)$--$(4)$ hold, then $A^\times$ is abelian, so it contains no
non-cyclic free subgroup. Conversely, suppose that these conditions fail.
If $|E^0|>1$, then there is $\mathtt{f}\in E^{\mathrm{st}}$ with
$s(\mathtt{f})\neq r(\mathtt{f})$, and
Proposition~\ref{prop:nonloop-edge-free-subgroup} shows that there is a free subgroup of rank two
in $A^\times$. If $|E^0|=1$, then
Corollary~\ref{cor:non-lv-rose-free-subgroup} yields a free subgroup of rank two inside $A^\times$. This proves the equivalence of $(1)$--$(5)$.
\end{proof}

\section*{Acknowledgements}
The author is deeply grateful to Roozbeh Hazrat for suggesting the question which led to this paper, and for many helpful discussions on weighted Leavitt path algebras. His comments and suggestions substantially improved the manuscript, especially the part concerning the Bergman--Preusser monoid argument and the embedding of ordinary Leavitt path algebras into weighted Leavitt path algebras. The author also thanks him for his encouragement and for carefully reading earlier versions of the paper. 

\section*{Funding}
The author is supported by the Vietnam National Foundation for Science and Technology Development (NAFOSTED) under Grant No.~101.04-2025.41.

\end{document}